\documentclass[10pt]{amsart}
\usepackage[utf8]{inputenc}
\usepackage{geometry}
\usepackage{fdsymbol}
\usepackage{natbib}

\numberwithin{equation}{section}

\theoremstyle{definition}
\newtheorem{theorem}{Theorem}[section]
\newtheorem{proposition}[theorem]{Proposition}
\newtheorem{lemma}[theorem]{Lemma}
\newtheorem{corollary}[theorem]{Corollary}
\newtheorem{definition}[theorem]{Definition}
\newtheorem{remark}[theorem]{Remark}

\theoremstyle{plain}
\numberwithin{theorem}{section}

\def\CC{{\mathbb{C}}}
\def\PP{{\mathbb{P}}}
\def\QQ{{\mathbb{Q}}}
\def\ZZ{{\mathbb{Z}}}

\DeclareMathOperator{\Bl}{Bl}
\DeclareMathOperator{\Br}{Br}

\newcommand{\Q}{\mathbb{Q}}
\newcommand{\Z}{\mathbb{Z}}
\newcommand{\C}{\mathbb{C}}
\newcommand{\NS}{\operatorname{NS}}
\newcommand{\Rat}{\mathrm{Rat}}

\newcommand{\id}{\mathrm{id}}
\newcommand{\Gr}{\operatorname{Gr}}
\newcommand{\End}{\operatorname{End}}
\newcommand{\Ccal}{\mathcal{C}}
\newcommand{\rk}{\operatorname{rk}}
\newcommand{\xspecial}{X\in\mathcal{C}_d}

\title{Hasse Obstructions to Rationality for special fourfolds}
\author{Aideen Fay}
\date{\today}

\begin{document}

\begin{abstract}
For every nonempty Hassett divisor $\mathcal C_d$ parameterizing special cubic fourfolds, we consider the quaternion class $\beta_d=(d/2,-3)$ in the two-torsion Brauer group. We prove that if a Hodge-general member of ${C}_d$ is rational then $\beta_d=0$ -- or, equivalently, that Huybrechts' twisted-K3 condition $(**')$ holds. Consequently, a very general member of $\mathcal C_d$ is irrational if $\beta_d\ne0$. Thus, a very general cubic fourfold containing a smooth cubic scroll or a Veronese surface is irrational, and hence so is a very general K\"uchle fourfold of type $(\mathrm{c7})$. We also obtain an analogous obstruction for Hodge--special Gushel--Mukai fourfolds: a very general member in the discriminant--$d$ locus can be rational only if $d$ is a sum of two squares.  Consequently, a very general Gushel--Mukai fourfold containing a cubic scroll is irrational.
\end{abstract}
\maketitle

\section{Introduction}
Let \(X\subset \PP^5\) be a smooth complex cubic fourfold with hyperplane class $h$. Define
\[
A(X)_{\Z}:=H^4(X,\Z)\cap H^{2,2}(X),\qquad
A(X) := A(X)_\ZZ \otimes_{\Z} \Q, \qquad
T_{X,\Z}
  :=A(X)_{\Z}^{\perp}\subset H^4(X,\Z),
\]
taking orthogonal complements with respect to the intersection pairing. Then $T_X:=T_{X,\Z}\otimes_{\Z}\mathbb Q$ is the rational transcendental Hodge structure of $X$. Let $\End_{\mathrm{Hdg}}(T_{X})$ denote the algebra of $\Q$-linear Hodge endomorphisms of $T_{X}$.

Hassett's divisor $\mathcal C_d$ parameterizes cubic fourfolds where $A(X)_{\Z}$ contains a primitive rank-two sublattice $K_d$ containing $h^2$ and having discriminant $d$ \cite[\S3.2]{hassett}. Such a cubic $\xspecial$ is called \emph{special of discriminant d}, and the locus of such cubics is nonempty if and only if $d$ satisfies
\begin{equation}
d>6
\qquad\text{and}\qquad
d\equiv 0\ \text{or}\ 2\pmod 6.
\tag{*}\label{eq}
\end{equation}
We define
\[
\mathcal{C}_d^{\mathrm{Hdg}} := \left\{ X\in\mathcal{C}_d: \operatorname{rk} A(X)_{\Z}=2,\ \operatorname{End}_{\mathrm{Hdg}}(T_{X}) =\mathbb{Q}\cdot\mathrm{id}_{T_X} \right\},
\]
and call its members \emph{Hodge-general}. As shown in Lemma~\ref{lem:generic}, a very general member of every nonempty $\mathcal C_d$ is Hodge-general.

For every such divisor, consider the quaternion class
\[
\beta_d:=\left(\frac d2,-3\right)\in\Br(\Q)[2],
\]
where \((a,b)\) denotes the Brauer class of the quaternion algebra
\((a,b)_{\Q}\).  The vanishing of $\beta_d$ is equivalent to Huybrechts' twisted-K3 condition $(**')$ recalled below.
\begin{theorem}\label{thm:main}
If a Hodge-general $X\in\mathcal C_d$ is rational, then $\beta_d=0$. Equivalently, $d$ satisfies $(**')$. Consequently if $\beta_d\ne0$, a very general member of $\mathcal C_d$ is irrational. In particular, when $\beta_d\ne0$, the rational locus \(\Rat\subset\mathcal C\) satisfies

\begin{multline}\label{eq:containment}
\Rat\cap\mathcal C_d\subseteq\{X\in\mathcal C_d:\rk A(X)_{\Z}\ge3\} \,\cup \\ \{X\in\mathcal C_d: \text{$\rk  A(X)_{\Z}=2$ and $T_X$ has non-trivial real multiplication}\},
\end{multline}
where $\mathcal{C}$ is the moduli space of smooth complex cubic fourfolds.
\end{theorem}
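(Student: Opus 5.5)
The plan is to turn rationality into a rational Hodge isometry between $T_X$ and a piece of the cohomology of a surface, and then to read off a constraint on rational quadratic forms that turns out to be the Hilbert-symbol condition $\beta_d=0$. Once the first assertion is proved, the remaining statements are formal. By Lemma~\ref{lem:generic} a very general member of $\mathcal C_d$ is Hodge-general, so it is irrational whenever $\beta_d\neq0$. The containment \eqref{eq:containment} is the contrapositive of the first assertion, combined with the observation that when $\rk A(X)_\Z=2$ the only way to fail $\End_{\mathrm{Hdg}}(T_X)=\Q$ is non-trivial endomorphisms. Since $T_X$ is of K3 type, Zarhin's theorem says these form a totally real field (real multiplication) or a CM field. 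I would also need to argue that CM does not occur with Picard-type rank $2$ here, or else absorb it into the statement.

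\emph{Step 1: from a birational map to a Hodge isometry.} Suppose $\phi\colon\PP^4\dashrightarrow X$ is birational. By weak factorization (or already by resolving $\phi$ and applying the blow-up formula on both sides), there is a smooth projective $Y$ obtained from $X$, and also from $\PP^4$, by sequences of blow-ups and blow-downs along smooth centers. The blow-up formula gives orthogonal decompositions of $H^4(Y,\Q)$ compatible with Hodge structures and with the intersection form up to sign. On the $\PP^4$ side the only non-algebraic summands are of the form $H^2(S_i,\Q)(-1)$, where the $S_i$ are surfaces blown up; curves contribute weight-$3$ pieces and points contribute nothing transcendental in $H^4$. The piece $T_X\subset H^4(X,\Q)$ is simple as a Hodge structure, because $\End_{\mathrm{Hdg}}(T_X)=\Q$ and $T_X$ has $h^{3,1}=1$. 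Comparing transcendental parts of the two decompositions of $H^4(Y,\Q)$ therefore shows that $T_X$ is Hodge isometric, up to a global sign, to $T(S)(-1)$ for a single surface $S$. Here $T(S)\subset H^2(S,\Q)$ is the minimal Hodge substructure containing $H^{2,0}$ coming from that centre, and the intersection forms agree because the blow-up decomposition is orthogonal. Ideally I would run this comparison integrally, using $H^4(Y,\Z)$, so that the summands attached to different centres are orthogonal integral lattices.

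\emph{Step 2: reduction to a twisted K3 condition.} Next I would show that such an $S$ forces the rational quadratic space $T_X(-1)$ to embed into $\widetilde\Lambda_\Q=(U^{\oplus4}\oplus E_8(-1)^{\oplus2})\otimes\Q$ with algebraic complement of the right shape. By Huybrechts' reformulation, this is exactly the statement that $T_X$ is rationally Hodge isometric to $T(S',\alpha)$ for some twisted K3 surface $(S',\alpha)$, i.e.\ condition $(**')$. Concretely, the rational form on $T_X$ is determined by $K_d^\perp$ inside $H^4(X,\Z)=I_{21,2}$. Its discriminant is $d$ up to squares, because $K_d$ has discriminant $d$ and $H^4$ is unimodular. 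Its Hasse invariants at each prime can be computed from the form $\langle 3\rangle$ on $h^2$ and the discriminant $d/3$ of the complement of $h^2$ in $K_d$.

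\emph{Step 3: identification with $\beta_d$.} Finally, I would compute the local obstruction to the embedding of Step~2 prime by prime. The discriminants match, so the only surviving invariant should be a Hilbert symbol, which I expect to be $(d/2,-3)_p$. This gives equivalence with $\beta_d=0$ in $\Br(\Q)[2]$, matching the abstract's claim that $\beta_d=0$ is equivalent to $(**')$.

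The main obstacle is Step~2. Rational quadratic spaces of small dimension embed into large ones quite freely, so merely knowing that $T_X(-1)\hookrightarrow H^2(S,\Q)$ for some surface carries no information. The content must come from the \emph{complement}, namely the algebraic part $\NS(S)_\Q$ together with the other blow-up summands and the algebraic classes on $X$ side (starting from $K_d$). The lattice balance $H^4(X)\oplus(\text{algebraic})\cong\bigoplus H^2(S_i)(-1)\oplus(\text{algebraic})$ should be compared as rational quadratic spaces. Since algebraic classes on both sides are spanned by cycle classes with computable Hasse invariants, I would try Witt cancellation here to isolate the invariant of $T_X$. Then I would check that what remains is precisely the Brauer class of the even Clifford algebra of $T_X$, which should equal $\beta_d$.
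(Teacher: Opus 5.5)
\textbf{There is a genuine gap, in Step~1, and the method you propose cannot close it.}

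Weak factorization gives a zigzag of blow-ups and blow-downs. It does not give a common $Y$ reached by smooth blow-ups from both $X$ and $\PP^4$; that would be strong factorization, which is open. If you instead resolve $\phi$, you get $Y\to\PP^4$ as a composite of smooth blow-ups, but $Y\to X$ is only a birational morphism. All you learn is that $\pi_X^*$ embeds $T_X$ into $\bigoplus_i H^2(S_i)(-1)$, where the $S_i$ are \emph{arbitrary} surfaces.

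Semisimplicity then only says that $T_X(1)$ is a direct summand of some $T(S_i)$. That $T(S_i)$ can be much larger (any $p_g$), and its Néron--Severi part is uncontrolled. The image of $T_X$ may also sit diagonally across several summands, so even your claimed isometry is lost. This is exactly the vacuous information you flag yourself: every polarized K3-type Hodge structure is a sub-Hodge structure of $H^2$ of some surface, by Kuga--Satake plus Lefschetz.

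Your Witt-cancellation repair cannot work either. The unwanted summands $T(S_i)$ are transcendental, not algebraic, and nothing pins down their Hasse invariants.

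\textbf{The missing idea.} What you need is that the surface is a \emph{K3 surface} whose primitive cohomology matches that of $X$. This is Gu\'er\'e's theorem \cite[Theorem~56]{guere}: rationality implies $H^4(X,\Q)_{\mathrm{prim}}\cong H^2(S,\Q)(-1)$ for a K3 surface $S$. It rests on the Hodge-atom invariants of \cite{KKPY} and Iritani's decomposition of quantum cohomology under blow-ups, not on the classical blow-up formula.

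\textbf{How the paper finishes, and where your Steps~2--3 differ.}
\begin{enumerate}
\item Restricting Gu\'er\'e's isomorphism to transcendental parts and using $\End_{\mathrm{Hdg}}(T_X)=\Q$ gives only a \emph{similitude} $T_X(1)\to T_S$ with multiplier $c\in\Q_{>0}$ (Proposition~\ref{prop:source}). Since $\rk T_S=21$, this forces $\rho(S)=1$.
\item Your Mukai-lattice route via Huybrechts would need an isometry, which you do not have. The paper sidesteps this: in rank $21$ the Hasse invariant is similitude-invariant by \eqref{eq:scale}. Hence $w(T_X)=w(T_S)=(-1,-1)$ by Lemma~\ref{lem:k3-hasse}.
\item On the cubic side, $L=A(X)\perp T_X$ with $L\simeq\langle1\rangle^{21}\perp\langle-1\rangle^{2}$ and $A(X)\simeq\langle3,d/3\rangle$. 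This gives $w(T_X)=(-1,-1)+(3,d/3)+(d,-1)$.
\item Equating the two expressions gives $(3,d/3)+(d,-1)=0$, and the Steinberg relations show this sum equals $(d/2,-3)=\beta_d$. No Clifford algebra or twisted-K3 embedding is needed.
\end{enumerate}

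\textbf{The CM case.} The CM possibility you left open in the containment is excluded because $\dim_\Q T_X=21$ is odd. Hence $[E:\Q]$ is odd, so $E$ cannot be CM (Lemma~\ref{lem:rm}).
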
 

By the norm criterion for $\Q(\sqrt{-3})/\Q$, this means that the class $\beta_d$ vanishes if and only if $v_p(d/2)$ is even for every prime $p \equiv 2 \pmod 3$. Indeed, this is equivalent to the condition introduced by Huybrechts in~\cite[Theorem~1.3]{Huybrechts2015TheKC}
\begin{equation}
\frac d2=\prod_i p_i^{n_i},
\qquad
n_i\equiv0\pmod 2
\quad\text{whenever}\quad
p_i\equiv2\pmod 3.
\tag{$**'$}\label{eq:starprime}
\end{equation}
Huybrechts proves that $X\in\mathcal C_d$ for some $d$ satisfying
$(**')$ if and only if there exist a twisted K3 surface $S$ with Brauer class $\alpha\in\Br(S)$, and an integral Hodge isometry
\begin{equation}\label{eq:hub-mukai}
\widetilde{H}(\mathcal{A}_X, \Z)\simeq\widetilde{H}(S, \alpha, \Z),
\end{equation}
where $\mathcal{A}_X$ is the Kuznetsov component of $X$. 

As shown in \cite[Equation~1.1]{Huybrechts2015TheKC}, this isomorphism of integral Mukai lattices  induces, up to sign, an integral Hodge isometry of transcendental lattices 
\begin{equation}\label{eq:hub}
T_{X,\mathbb Z}(1)\simeq T(S,\alpha,\Z)
 \simeq \operatorname{Ker}(\alpha\,:T(S,\Z)\longrightarrow {\Q}/{\Z}).
\end{equation}
Huybrechts also proves that $(**')$ is equivalent to the Fano variety of lines $F$ on $X$ being
birational to a moduli space of stable \(\alpha\)-twisted sheaves on $S$; see \cite[Proposition~4.1]{Huybrechts2015TheKC}.

\begin{corollary}
\label{cor:rational-addington}
Let $X\in\mathcal C_d$ satisfy $\rk A(X)_{\Z}=2$. Then the following conditions are equivalent:
\begin{enumerate}
    \item \(d\) satisfies \((**')\);
    \item there exist a projective K3 surface $S$ and an isometry $T_X(1)\simeq T_S$ of polarized rational Hodge structures;
    \item There exist a projective K3 surface $S'$ and $\alpha' \in Br(S')$ such that $F$ is birational to a moduli space of stable \(\alpha'\)-twisted sheaves on \(S'\).
\end{enumerate}
Moreover, if $X\in\mathcal C_d^{\mathrm{Hdg}}$ is rational, then
these conditions hold.
\end{corollary}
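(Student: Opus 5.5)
The plan is to prove the cycle (1)$\Rightarrow$(3)$\Rightarrow$(2)$\Rightarrow$(1), and then read off the final assertion from Theorem~\ref{thm:main}. Throughout we assume $\rk A(X)_{\Z}=2$, so $A(X)_{\Z}=K_d$ and $T_{X,\Z}=K_d^{\perp}$ has rank $21$ and discriminant $d$ (up to sign).

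For (1)$\Rightarrow$(3) we invoke Huybrechts \cite[Proposition~4.1]{Huybrechts2015TheKC}. If $d$ satisfies $(**')$, there is a twisted K3 surface $(S,\alpha)$ with $F$ birational to a moduli space of stable $\alpha$-twisted sheaves on $S$; take $S'=S$ and $\alpha'=\alpha$.

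For (3)$\Rightarrow$(2), suppose $F$ is birational to a moduli space $M$ of stable $\alpha'$-twisted sheaves on $S'$. Both $F$ and $M$ are hyperkähler, so birationality gives a Hodge isometry $H^2(F,\Z)\simeq H^2(M,\Z)$, hence of the transcendental lattices. Beauville--Donagi gives $T_F\simeq T_{X,\Z}(1)$ via the Abel--Jacobi map. By the standard description of twisted moduli spaces, $T(M)\simeq T(S',\alpha',\Z)$, which has finite index in $T(S',\Z)$. Tensoring with $\Q$ gives a rational Hodge isometry $T_X(1)\simeq T(S',\alpha',\Z)\otimes\Q=T_{S'}\otimes\Q$, which is (2). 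Only sign and polarization conventions need care here.

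For (2)$\Rightarrow$(1), suppose $T_X(1)\simeq T_S$ rationally. Rational quadratic forms of equal rank are isometric iff their discriminants (in $\Q^\times/\Q^{\times2}$) and Hasse invariants agree. I would then compute both sides:
\begin{itemize}
\item $H^4(X,\Q)(1)$ has signature $(2,21)$ and discriminant $1$ up to sign, and is isometric to $\langle 1\rangle^{2}\oplus\langle -1\rangle^{21}$.
\item $T_X(1)$ is the orthogonal complement of $K_d(1)$, with $K_d(1)$ negative definite of discriminant $d$ and Gram matrix $\begin{pmatrix}-3&*\\ *&*\end{pmatrix}$. Diagonalizing gives $K_d(1)\simeq\langle -3,-d/3\rangle$ over $\Q$.
\item $T_S$ is the orthogonal complement in $H^2(S,\Q)\simeq U^3\oplus E_8(-1)^2$ of $\NS(S)_\Q$. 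If the isometry holds, $\NS(S)$ has rank $1$, generated by $h$ with $h^2=2n>0$.
\end{itemize}
Witt cancellation then says $T_X(1)\simeq T_S$ if and only if $\langle -3,-d/3\rangle\oplus(\text{rank-}2\text{ complement})$ matches $\langle 2n\rangle\oplus(\ldots)$ inside the ambient unimodular forms. The ambient forms differ in rank, $23$ versus $22$, so one adds a hyperbolic plane or a $\langle\pm1\rangle$ summand to compare. Equating discriminants forces $2n\equiv d/3\cdot 3=d$ up to squares. Equating Hasse invariants then reduces to the Hilbert symbol condition $(d/2,-3)_p=1$ for all $p$, that is, $\beta_d=0$. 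By the norm criterion stated after Theorem~\ref{thm:main}, this is exactly $(**')$.

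This local-invariant computation is the main obstacle: getting the bookkeeping of Witt indices and Hasse-invariant conventions right. I would first test it on $d=14$ (where $\beta_{14}=(7,-3)=0$ since $7=2^2+2^2\cdot\ldots$ is a norm) and on $d=20$, which should fail since $10$ has $v_5=1$. As an alternative route, Addington--Thomas / Huybrechts show that a rational Hodge isometry with a K3 transcendental lattice is equivalent to the existence of a rational Mukai-vector isometry $\widetilde H(\mathcal A_X,\Q)\simeq\widetilde H(S,\Q)$. The latter reduces, by the Huybrechts lattice computation, to $\langle -3,-d/3\rangle$ being represented compatibly, which again yields $(**')$.

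Finally, if $X\in\mathcal C_d^{\mathrm{Hdg}}$ is rational, Theorem~\ref{thm:main} gives $\beta_d=0$. This is equivalent to $(**')$, so (1) holds and hence all three conditions hold.
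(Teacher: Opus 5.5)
Your proof is correct, but it closes the cycle of implications differently from the paper.

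The paper argues as follows:
\begin{enumerate}
\item (1)$\Rightarrow$(2) comes from Huybrechts' integral isometry $T_{X,\Z}(1)\simeq T(S,\alpha,\Z)$.
\item (2)$\Rightarrow$(1) uses Witt cancellation in the rational Mukai lattices. This produces $U_\Q\subset N(\mathcal A_X)_\Q$, hence an embedding $U(m^2)\hookrightarrow N(\mathcal A_X)$. Huybrechts' twisted-K3 criterion then gives some $d'$ satisfying $(**')$, and rank-two uniqueness forces $d'=d$.
\item (1)$\Leftrightarrow$(3) comes from Huybrechts' Proposition~4.1 together with the same uniqueness argument.
\end{enumerate}

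You use Proposition~4.1 only for (1)$\Rightarrow$(3). Your (3)$\Rightarrow$(2) via Beauville--Donagi, birational invariance of $H^2$, and $H^2(M,\Z)\simeq v^\perp\subset\widetilde H(S',\alpha',\Z)$ is sound. It does need $M$ to be a smooth projective hyperk\"ahler manifold ($v$ primitive, polarization generic), which is the situation in Huybrechts.

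Your (2)$\Rightarrow$(1) is exactly the computation of Proposition~\ref{prop:hasse} with $c=1$:
\begin{enumerate}
\item $\rk T_S=21$ forces $\rho(S)=1$, so Lemma~\ref{lem:k3-hasse} gives $w(T_S)=(-1,-1)$.
\item Equation~\eqref{eq:wT}, scale invariance in rank $21$ (Remark~\ref{rem:rank}), and $(3,d/3)+(d,-1)=\beta_d$ give $w\bigl(T_X(1)\bigr)=(-1,-1)+\beta_d$.
\end{enumerate}
So the padding by a hyperbolic plane or a $\langle\pm1\rangle$ summand that you sketch is unnecessary. Just compute $\delta$ and $w$ of $T_X(1)$ and $T_S$ from their orthogonal decompositions and compare. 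Only the trivial direction ``isometric $\Rightarrow$ equal invariants'' is used, so leaving the signature out of your classification statement does no harm.

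Your route is more elementary and self-contained. It avoids twisted Mukai lattices, the passage from $U_\Q$ to an integral $U(m^2)$, and the rank-two uniqueness step. It also shows directly that $\beta_d$ is the obstruction to (2).

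The paper's Witt-cancellation route is more structural. It shows that any rational Hodge isometry $T_X(1)\simeq T_S$ yields a twisted-K3 Mukai realization whatever the algebraic rank, with $\rk A(X)_\Z=2$ used only to identify $d'$ with $d$. Your Hasse computation, by contrast, is tied to rank two: odd rank $21$, $\rho(S)=1$, and $A(X)\simeq\langle 3,d/3\rangle$.

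A minor slip in your sanity check: $\beta_{14}=0$ because $7=2^2+3\cdot 1^2$ is a norm from $\Q(\sqrt{-3})$.
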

The equivalences follow from the results of Huybrechts and the Hasse invariant comparison established in Section~\ref{sec:hasse}. In a sense, this is the rational-coefficient, twisted counterpart to the result proved by Addington in~\cite[Theorem~1]{Addington_2016}, which shows that Hassett's stronger condition 
\begin{equation}
4 \nmid d, \quad 9 \nmid d, \quad \text{and} \quad p \nmid d \text{ for every odd prime } p \equiv 2 \pmod 3,
\tag{$**$}\label{eq:hassett}
\end{equation}
is equivalent to the existence of an untwisted K3 surface $S$ and an integral Hodge isometry
$T_{X,\Z}(1)
\simeq
T(S, \Z)$,
and also to $F$ being birational to a moduli space of stable sheaves on $S$.

We apply Theorem~\ref{thm:main} to prove irrationality for the very
general members of infinitely many Hassett divisors. The smallest $d$ for which our obstruction is nonzero is $12$. Consequently, we get the following new results: 
\begin{corollary}\label{cor:c12}
A very general member of $\mathcal C_{12}$ is irrational.
Equivalently, a very general cubic fourfold containing a cubic scroll
is irrational.
\end{corollary}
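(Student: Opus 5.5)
Corollary~\ref{cor:c12} is a direct application of Theorem~\ref{thm:main} with $d=12$. Two things need checking: that $\mathcal C_{12}$ is a nonempty Hassett divisor, and that $\beta_{12}\neq 0$. For the first, $d=12>6$ and $12\equiv 0 \pmod 6$, so $d=12$ satisfies \eqref{eq}. Hence $\mathcal C_{12}$ is nonempty, and by Lemma~\ref{lem:generic} a very general member of it is Hodge-general.

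For the second, we compute $\beta_{12}=(6,-3)\in\Br(\Q)[2]$. By the reformulation recorded after Theorem~\ref{thm:main}, $\beta_d$ vanishes iff $v_p(d/2)$ is even for every prime $p\equiv 2\pmod 3$. Here $d/2=6=2\cdot 3$, and $p=2\equiv 2\pmod 3$ with $v_2(6)=1$ odd. So \eqref{eq:starprime} fails and $\beta_{12}\ne 0$.

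As an independent sanity check, compute the local Hilbert symbol at $p=3$. Write $-3=3\cdot(-1)$ and $6=3\cdot 2$, so
\[
(6,-3)_3=(3,3)_3\,(3,-1)_3\,(2,3)_3\,(2,-1)_3 .
\]
Using $(3,3)_3=(3,-1)_3$, the first two factors cancel and $(2,-1)_3=1$, leaving $(2,3)_3=\left(\tfrac{2}{3}\right)=-1$. Thus the class is nontrivial at $3$, and by reciprocity also at $2$. The main care needed is exactly this local computation, or the reliance on the norm criterion; everything else is formal.

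Theorem~\ref{thm:main} then says a very general member of $\mathcal C_{12}$ is irrational. For the equivalent formulation, recall Hassett's description of $\mathcal C_{12}$ \cite{hassett}: it is the closure of the locus of cubic fourfolds containing a smooth cubic scroll $T$. The lattice $\langle h^2,T\rangle$ has $h^2\cdot h^2=3$, $h^2\cdot T=3$ and $T\cdot T=7$, so its discriminant is $21-9=12$. Conversely, a very general member of $\mathcal C_{12}$ contains a cubic scroll. So ``very general cubic fourfold containing a cubic scroll'' and ``very general member of $\mathcal C_{12}$'' describe the same generic objects, which gives the second statement.
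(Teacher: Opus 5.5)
Your proposal is correct and takes the same route as the paper: apply Theorem~\ref{thm:main} with $d=12$ once $\beta_{12}=(6,-3)\neq 0$ is checked, and use Hassett's description of $\mathcal C_{12}$ as the closure of the cubic-scroll locus. The only difference is cosmetic: the paper computes $(6,-3)_2=(2,-3)_2=-1$ directly, while you use the valuation criterion at $p=2$ and cross-check $(6,-3)_3=-1$ via reciprocity, which agrees.
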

\begin{corollary}\label{cor:c20}
A very general cubic fourfold in $\Ccal_{20}$ is irrational. Hence a very general K\"uchle fourfold of type $(\mathrm{c7})$ is irrational.
\end{corollary}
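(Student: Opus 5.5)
Corollary~\ref{cor:c20} follows from Theorem~\ref{thm:main} once two things are in place: $\beta_{20}\neq 0$, and a very general Küchle fourfold of type $(\mathrm{c7})$ is birational to a very general member of $\mathcal C_{20}$.

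\textbf{The class $\beta_{20}$.} Here $d/2=10=2\cdot 5$, so $\beta_{20}=(10,-3)$. By the norm criterion recalled after Theorem~\ref{thm:main}, $\beta_{20}$ vanishes if and only if $v_p(10)$ is even for every prime $p\equiv 2\pmod 3$. Both $2$ and $5$ are such primes, and $v_2(10)=v_5(10)=1$ is odd. Hence $\beta_{20}\neq0$ and $d=20$ violates $(**')$.

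As a local cross-check, $2$ is inert in $\Q(\sqrt{-3})$, since $-3\equiv 5\pmod 8$. Moreover $10$ has odd $2$-adic valuation, so it is not a local norm at $2$. The Hilbert symbol $(10,-3)_2$ is therefore $-1$.

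\textbf{Irrationality of a very general member of $\mathcal C_{20}$.} The divisor $\mathcal C_{20}$ is nonempty, since $20>6$ and $20\equiv 2\pmod 6$. By Lemma~\ref{lem:generic}, a very general member of $\mathcal C_{20}$ lies in $\mathcal C_{20}^{\mathrm{Hdg}}$. Theorem~\ref{thm:main} then shows it is irrational.

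\textbf{The Küchle fourfolds.} I would invoke the known birational description of Küchle $(\mathrm{c7})$ fourfolds, due to Kuznetsov and to Bernardara–Fatighenti–Manivel–Tanturri. A $(\mathrm{c7})$ fourfold is birational to a cubic fourfold containing a Veronese surface. Such cubics are exactly the members of $\mathcal C_{20}$, and the construction dominates $\mathcal C_{20}$.

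The one point needing care is ``very general'': the complement of a countable union of proper closed subsets must pull back to such a complement. This holds because the correspondence is dominant between irreducible parameter spaces, so preimages of proper closed subsets stay proper closed.

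The main obstacle is citing precisely that the $(\mathrm{c7})$ family dominates $\mathcal C_{20}$, rather than only landing inside it. If only the inclusion is available, I would compare dimensions: both families are $19$-dimensional modulo automorphisms. Together with finiteness of the associated cubic construction, this equality gives dominance.
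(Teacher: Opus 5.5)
Your proposal is correct and follows essentially the paper's route. You show $\beta_{20}=(10,-3)\neq 0$ (detected at $p=2$, whereas the paper uses $(10,-3)_5=-1$; by reciprocity both places ramify, so either check suffices), apply Theorem~\ref{thm:main}, and pass to $(\mathrm{c7})$ fourfolds via Kuznetsov's identification $Y\simeq\Bl_V(X)$. The dominance you flag as the main obstacle is exactly what the paper cites from Fan--Lai: the map $[U_{20}/\mathrm{PGL}_3(\CC)]\to\mathcal C_{20}$ is birational with image containing $\mathcal C_{20}\setminus\mathcal C_8$. So your dimension-count fallback, which would itself need finiteness of Veronese surfaces in a general such cubic, is not needed. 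Note also that cubics containing a Veronese surface form a dense open subset of $\mathcal C_{20}$, not literally all of it.
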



We also obtain an analogous obstruction for \emph{Hodge--special} Gushel--Mukai fourfolds; see Theorem~\ref{thm:gm} below. To our knowledge, this gives the first irrationality results for
very general members of fixed Noether--Lefschetz divisors of
Gushel--Mukai fourfolds. In particular, with $d=12$, we obtain:

\begin{corollary}\label{cor:scroll}
A very general Gushel--Mukai fourfold containing a cubic scroll is
irrational.
\end{corollary}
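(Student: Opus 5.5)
The plan is to derive this from Theorem~\ref{thm:gm}, the Gushel--Mukai analogue of Theorem~\ref{thm:main}: a very general member of the discriminant-$d$ Noether--Lefschetz locus $\mathcal D_d$ of Gushel--Mukai fourfolds can be rational only if $d$ is a sum of two squares. Two things remain to check. First, Gushel--Mukai fourfolds containing a cubic scroll fill out a component of $\mathcal D_{12}$ (in the sense that a very general such fourfold is a very general point of that component). Second, $12$ is not a sum of two squares.

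\textbf{Step 1: the lattice.} Let $X$ be a Gushel--Mukai fourfold with Gushel map $\gamma\colon X\to\Gr(2,5)$, and let $\Sigma\subset X$ be a cubic scroll. Put $\sigma_1^2:=\gamma^*\sigma_1^2$ and $\sigma_{1,1}:=\gamma^*\sigma_{1,1}$. Consider the rank-three lattice $K=\langle \sigma_1^2,\sigma_{1,1},[\Sigma]\rangle\subset A(X)_\Z$. The vanishing lattice of Gushel--Mukai fourfolds has Gram matrix $\begin{pmatrix}2&2\\2&4\end{pmatrix}$ on $\langle\sigma_{1,1},\sigma_1^2\rangle$. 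I will compute the remaining entries:
\begin{enumerate}
\item $[\Sigma]\cdot\sigma_1^2=\deg\Sigma=3$;
\item $[\Sigma]\cdot\sigma_{1,1}$, from the position of the scroll relative to $\Gr(2,5)$ (the relevant degree of $\gamma(\Sigma)$ in $\Gr(2,5)$);
\item $[\Sigma]^2=c_2(N_{\Sigma/X})$, via the self-intersection formula $c_2(N)=c_2(T_X|_\Sigma)-c_1(N)c_1(T_\Sigma)-c_2(T_\Sigma)$. Here $c_2(T_\Sigma)=4$ and $K_\Sigma^2=8$ for the Hirzebruch surface $\mathbb F_1$, and $c_1(T_X)=2H$ for a Gushel--Mukai fourfold.
\end{enumerate}
I expect these to reproduce the known computation of Debarre--Iliev--Manivel, giving $\operatorname{disc}K=12$, and I will cite that computation if the direct one is messy.

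\textbf{Step 2: saturation and very generality.} I will check that $K$ is primitive, so that $X\in\mathcal D_{12}$ and not in some $\mathcal D_{12/m^2}$. I will also check that the family of Gushel--Mukai fourfolds containing a cubic scroll has codimension one in moduli, via a parameter count as in Debarre--Iliev--Manivel. It then dominates a component of $\mathcal D_{12}$. Consequently a very general member of the family is a very general point of that component, and in particular is Hodge-general in the sense used in Theorem~\ref{thm:gm}.

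\textbf{Step 3: arithmetic.} Since $12=2^2\cdot3$ and $3\equiv3\pmod 4$ appears to an odd power, $12$ is not a sum of two squares. Theorem~\ref{thm:gm} then shows that a very general member is irrational.

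The main obstacle is Step~1 together with Step~2: the intersection numbers $[\Sigma]\cdot\sigma_{1,1}$ and $[\Sigma]^2$, and the primitivity and dominance claims. For these I would rely on the existing classification of Noether--Lefschetz divisors of Gushel--Mukai fourfolds rather than recompute them from scratch.
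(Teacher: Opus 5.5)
Your proposal is correct and follows the paper's argument. The paper cites Debarre--Iliev--Manivel for the fact that the cubic-scroll locus is dense in a component of $\mathcal D_{12}$, notes that $(12,-1)\neq0$ because $v_3(12)=1$, and concludes via Theorem~\ref{thm:gm} together with the very-generality argument of Corollary~\ref{cor:inf}.

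There is one slip to fix if you carry out Step~1 by hand. The matrix $\begin{pmatrix}2&2\\2&4\end{pmatrix}$ is the Gram matrix of the ambient lattice, not the vanishing lattice, and it is taken in the basis $\sigma_{1,1},\sigma_2$. In your basis $\sigma_{1,1},\sigma_1^2$ the Gram matrix is $\begin{pmatrix}2&4\\4&10\end{pmatrix}$. With $[\Sigma]\cdot\sigma_{1,1}=1$ and $[\Sigma]^2=4$, which holds for a scroll of class $2\sigma_{3,1}+\sigma_{2,2}$ in $\Gr(2,5)$, this correctly gives discriminant $12$.
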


Debarre--Iliev--Manivel studied Hodge--special Gushel--Mukai fourfolds in~\cite[\S6]{debarre2015special}. For these fourfolds, Pertusi proved that the condition $\gamma_d=0$ in Theorem~\ref{thm:gm} is equivalent to the existence of an associated twisted K3 surface; see \cite[Theorem~1.1]{pertusi2019double}. This is compatible with the Kuznetsov--Perry conjecture, which states that the GM category of every rational Gushel--Mukai fourfold is
equivalent to the derived category of a K3 surface
\cite[Conjecture~3.12]{Kuznetsov2016DerivedCO}.  Theorem~\ref{thm:gm} therefore provides an obstruction to rationality without assuming this conjecture.

Finally, we apply our obstruction to  Verra fourfolds in Corollary~\ref{cor:verra} to show that a Hodge-general cubic fourfold in $\mathcal C_d$, or a Hodge-general Gushel--Mukai fourfold in $\mathcal D_d$, can be
birational to a smooth Verra fourfold only if $d$ satisfies the respective condition $(**')$ of Huybrechts or Pertusi for a twisted K3 realization. This implies:

\begin{corollary}\label{cor:verra-examples}
None of the following are birational to a smooth Verra fourfold:
\begin{itemize}
    \item a very general cubic fourfold containing a smooth cubic scroll;
    \item a very general cubic fourfold containing a Veronese surface;
    \item a very general Gushel--Mukai fourfold containing a cubic scroll;
    \item a very general K\"uchle fourfold of type $(\mathrm{c7})$.
\end{itemize}
\end{corollary}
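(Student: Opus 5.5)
The plan is to deduce each item from the Verra-fourfold obstruction (Corollary~\ref{cor:verra}, stated above as an application of Theorem~\ref{thm:main}). That obstruction says a Hodge-general $X\in\mathcal C_d$ (resp.\ a Hodge-general Gushel--Mukai fourfold in $\mathcal D_d$) can be birational to a smooth Verra fourfold only if $d$ satisfies Huybrechts' $(**')$ (resp.\ Pertusi's condition $\gamma_d=0$). It then remains to check that each listed family lives in a divisor where the relevant class is nonzero, and that its very general member is Hodge-general.

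The mechanism behind Corollary~\ref{cor:verra}, which I would recall or reprove if needed, runs in three steps.
\begin{enumerate}
\item A smooth Verra fourfold $V$, the double cover of $\PP^2\times\PP^2$ branched along a $(2,2)$ divisor, has each projection to $\PP^2$ a quadric surface fibration. The associated discriminant double cover of $\PP^2$ is a K3 surface $S$ of degree $2$ carrying a Brauer class $\alpha$. This gives a rational Hodge isometry between the transcendental part of $H^4(V,\Q)(1)$ and $T(S,\alpha,\Z)\otimes\Q$.
\item A birational map $X\dashrightarrow V$ is factored by weak factorization into blow-ups and blow-downs along smooth centres of dimension at most $2$. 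The centres contribute only Tate twists of $H^*$ of curves and surfaces. For $X$ Hodge-general, $T_X$ is irreducible with $\End_{\mathrm{Hdg}}(T_X)=\Q$, so $T_X(1)$ must embed into the transcendental part of $V$, or of a surface centre. Comparing ranks and polarizations, as in the proof of Theorem~\ref{thm:main}, yields an isometry $T_X(1)\simeq T_S\otimes\Q$ of polarized rational Hodge structures. Note that $T(S,\alpha)$ and $T(S)$ agree rationally.
\item Corollary~\ref{cor:rational-addington}, (2)$\Rightarrow$(1), then gives $(**')$, i.e.\ $\beta_d=0$. The Gushel--Mukai case is identical, using Theorem~\ref{thm:gm} and Pertusi's criterion.
\end{enumerate}

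The arithmetic check runs as follows.
\begin{itemize}
\item Cubic fourfolds containing a smooth cubic scroll form $\mathcal C_{12}$. Here $\beta_{12}=(6,-3)$, and $v_2(6)=1$ is odd with $2\equiv 2\pmod 3$, so $\beta_{12}\neq0$.
\item Cubic fourfolds containing a Veronese surface form $\mathcal C_{20}$, as do K\"uchle fourfolds of type $(\mathrm{c7})$ by Corollary~\ref{cor:c20}. Here $\beta_{20}=(10,-3)$, with $v_2(10)=v_5(10)=1$ odd and $2,5\equiv 2\pmod 3$, so $\beta_{20}\neq 0$.
\item Gushel--Mukai fourfolds containing a cubic scroll lie in $\mathcal D_{12}$, and $12$ is not a sum of two squares, so $\gamma_{12}\neq0$.
\end{itemize}
By Lemma~\ref{lem:generic}, and its Gushel--Mukai analogue, a very general member of each family is Hodge-general. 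This uses that each family is dense in the corresponding divisor; for K\"uchle (c7) I take this from the identification used in Corollary~\ref{cor:c20}.

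The main obstacle is step~2, transporting the Hodge structure across the birational map. The centres of the blow-ups could be surfaces whose transcendental lattices absorb $T_X(1)$ instead of $V$. Here I would argue that any such surface contribution again produces a polarized isometry with the transcendental part of a surface. For K3-type $T_X$, Hodge-generality forces that surface to have a K3-type transcendental piece of the same rank. Running the Hasse-invariant comparison of Section~\ref{sec:hasse} on that isometry then gives $\beta_d=0$ directly, which is exactly how Theorem~\ref{thm:main} itself is proved. So Corollary~\ref{cor:verra} should follow by replacing ``rational'' with ``birational to $V$'' in that argument.
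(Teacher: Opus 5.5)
Your top-level argument is the paper's own proof, and it is correct. You cite Corollary~\ref{cor:verra} and check $\beta_{12}=(6,-3)\neq0$, $\beta_{20}=(10,-3)\neq0$ and $\gamma_{12}=(12,-1)\neq0$. You use that each family is dense in $\mathcal C_{12}$, $\mathcal C_{20}$, or a component of $\mathcal D_{12}$, so its very general member is Hodge-general. For (c7) you pass from $Y\simeq\Bl_V X$ to the birational cubic $X\in\mathcal C_{20}$. Provided you simply invoke Corollary~\ref{cor:verra}, nothing more is needed.

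The ``mechanism'' you offer for reproving Corollary~\ref{cor:verra} would not work, and you should drop it. In Step~2, weak factorization only shows that the simple summand $T_X$ reappears on the $V$ side, either in $H^4(V)$ or in $H^2(Z)(-1)$ for some surface $Z$ blown down on the way to $V$. Nothing rules out the second option.

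Such a $Z$ need not be a K3 surface, and $T_X(1)$ can be a proper summand of its transcendental part. For instance, via Kuga--Satake, every K3-type Hodge structure embeds in $H^2$ of a surface inside an abelian variety. So Hodge-generality does not produce a ``K3-type piece of the same rank''. The Hasse comparison on such a copy is also vacuous: $q_Z$ restricts to a multiple of $-q_X$, which just returns $w(T_X)$.

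Lemma~\ref{lem:k3-hasse} carries information only because $T_X(1)$ is identified with the \emph{entire} transcendental lattice of a K3 surface with $\rho(S)=1$. Its complement $\langle n\rangle$ inside the known K3 lattice is what pins down $w(T_S)=(-1,-1)$. This is why the classical blow-up argument gives no obstruction for fourfolds. The paper gets the missing input from the Hodge-atom result \cite[Theorem~21]{benedetti2026quantumcohomologybirationalgeometry}: a K3 surface $S$ with a Hodge isomorphism of the whole primitive part, $P_X\simeq H^2(S,\Q)(-1)$, which forces $\rho(S)=1$.

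Step~3 has a second, smaller problem. The argument of Proposition~\ref{prop:source} yields only a similitude with some multiplier $c$, not a Hodge isometry. Since $\End_{\mathrm{Hdg}}(T_X)=\Q$, rescaling changes $c$ only by squares, so an isometry would require $c\in\Q^{\times2}$, which nothing provides. Hence (2)$\Rightarrow$(1) of Corollary~\ref{cor:rational-addington} does not apply directly. The paper instead uses that the Hasse invariant is similitude-invariant in rank $21$ (Remark~\ref{rem:rank}) and compares $w(T_X)=(-1,-1)$ with \eqref{eq:wT} and \eqref{eq:gm-hasse}.
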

The entry points for our work are the results of Gu\'er\'e~\cite[Theorem~56]{guere} and Benedetti--Manivel--Perrin~\cite[Theorem~13]{benedetti2026quantumcohomologyirrationalitygushelmukai} for cubic and Gushel--Mukai fourfolds, respectively. These in turn build upon the recent \emph{Hodge atom} birational invariants introduced by Katzarkov--Kontsevich--Pantev--Yu in \citep{KKPY}, and the decomposition theorem for quantum cohomology under blowups by Iritani~\citep{iritani2023quantum, iritani2026notesdecompositiontheoremblowups}.

\section{K3 realizations and rational Hodge similitudes}
\label{sec:iso}
Throughout, we work with smooth complex projective varieties over $\C$. Recall that a \emph{polarization} of a weight-$n$ rational Hodge structure $V$ is a morphism $\psi\colon V \otimes V \to \Q(-n)$ whose Weil form is symmetric and positive-definite. A \emph{Hodge isometry} is an isomorphism of Hodge structures compatible with the polarizations. Orthogonal complements are taken over $\Q$ with respect to the cup-product forms $q_X$ and $q_S$. 

Recall that a smooth complex cubic fourfold $X$ has Hodge numbers $h^{3,1}(X)=h^{1,3}(X)=1$ and $h^{2,2}(X)=21$. Its
primitive cohomology is defined as
\[
H^4(X,\Q)_{\text{prim}}=(\Q h^2)^\perp\subset H^4(X,\Q),
\]
with signature $(20,2)$. Define the primitive algebraic cohomology $N_X$ as 
\[
N_X:=A(X)\cap H^4(X,\Q)_{\text{prim}}.
\] 
The Hodge--Riemann relations give the orthogonal decompositions 
\begin{equation}\label{eq:decompositions}
A(X) =\Q h^2\perp N_X,
\qquad
H^4(X,\Q)_{\text{prim}} =N_X\perp T_X.
\end{equation}
In the Hodge-general case, $T_X$ therefore has rank-$21$ and signature $(19,2)$, so $T_X(1)$ is a Hodge structure of K3 type polarized by $-q_X$.  

For a K3 surface $S$, let 
\[
T_{S,\Z}:=\operatorname{NS}(S)^\perp\subset H^2(S,\Z),
\qquad
T_S:=T_{S,\Z}\otimes_{\Z}\Q.\]
Equivalently, $T_S$ is the smallest rational sub-Hodge structure of $H^2(S,\Q)$ containing $H^{2,0}(S)$; see \cite[Definition~3.2.5]{lecturesHuybrechts}.

\begin{definition}\label{def:hg}
The Hodge-general locus in $\mathcal C_d$ is
\[
\mathcal{C}_d^{\mathrm{Hdg}} := \left\{ X\in\mathcal{C}_d: \operatorname{rk} A(X)_{\Z}=2,\ \operatorname{End}_{\mathrm{Hdg}}(T_{X}) =\mathbb{Q}\cdot\mathrm{id}_{T_X} \right\},
\]
Its members are called Hodge-general.
Let $\End_{\mathrm{Hdg}}(T_{X})$ denote the algebra of
$\Q$-linear Hodge endomorphisms of $T_{X}$.
\end{definition}
 
\begin{lemma}\label{lem:rm}
Let \(X\in\mathcal C_d\) satisfy $\rk A(X)_{\Z}=2$.  Then
\[
\End_{\mathrm{Hdg}}(T_X)\neq \mathbb Q\cdot\id_{T_X}
\quad\Longleftrightarrow\quad
T_X \text{ has nontrivial real multiplication}.
\]
\end{lemma}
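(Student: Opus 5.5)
The plan is to invoke Zarhin's structure theorem for the Hodge endomorphism algebra of an irreducible Hodge structure of K3 type. Since $\rk A(X)_{\Z}=2$, the decompositions \eqref{eq:decompositions} show that $T_X$ has rank $21$ and signature $(19,2)$. The Tate twist $T_X(1)$ is then a polarized rational Hodge structure of K3 type, polarized by $-q_X$, with $h^{2,0}=1$.

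The first step is to check that $T_X(1)$ is irreducible, i.e.\ has no proper nonzero rational sub-Hodge structure. Any sub-Hodge structure $W$ either contains the line $H^{3,1}(X)$ or is orthogonal to it, using the polarization. If $W\perp H^{3,1}$, then $W$ also lies in $\overline{H^{3,1}}^{\perp}$, so $W\subset H^{2,2}$. Hence $W$ consists of rational Hodge classes, so $W\subset A(X)\cap T_X=0$. If instead $W\supset H^{3,1}$, then $W^\perp$ is a sub-Hodge structure orthogonal to $H^{3,1}$, so $W^\perp=0$ by the same argument and $W=T_X$.

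With irreducibility in hand, Zarhin's theorem says that $E:=\End_{\mathrm{Hdg}}(T_X)$ is a number field. It is either totally real, in which case every element is self-adjoint for $q_X$, or a CM field. Moreover, $E$ acts on the line $H^{3,1}(X)$ through a character $\epsilon\colon E\to\C$ that is an embedding. I would take \emph{nontrivial real multiplication} to mean that $E$ is a totally real field strictly larger than $\Q$, which I believe is how the paper intends the term.

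The direction ($\Leftarrow$) is immediate: a totally real $E\neq\Q$ gives $E\neq\Q\cdot\id$.

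For ($\Rightarrow$), we must rule out the case that $E$ is a CM field. Zarhin shows that $\dim_E T_X=1$ in the CM case. Then $[E:\Q]=21$, which is odd. But a CM field has even degree over $\Q$, since it is a totally imaginary quadratic extension of a totally real field. This contradiction forces $E$ to be totally real, so $E\neq\Q$ means exactly nontrivial real multiplication.

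The main obstacle is that this uses the full strength of Zarhin's result, namely $\dim_E T=1$ in the CM case. If I want to avoid citing it, I would argue directly. Let $E_0\subset E$ be the maximal totally real subfield. Then $T_X$ is an $E$-vector space, so $21=[E:\Q]\cdot\dim_E T_X$. In the CM case $[E:\Q]$ is even, which is impossible since $21$ is odd. This parity argument needs only that $E$ is a field acting faithfully on $T_X$, which makes $T_X$ a free $E$-module, and not the stronger dimension statement. So the only genuine input is the first part of Zarhin's theorem: irreducibility implies $E$ is totally real or CM.
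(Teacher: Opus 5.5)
Your proposal is correct and is essentially the paper's proof: irreducibility of $T_X$ from $h^{3,1}(T_X)=1$ and $A(X)\cap T_X=0$, then Zarhin's dichotomy (totally real or CM), then the parity count $21=[E:\Q]\cdot\dim_E T_X$, which excludes the CM case because CM fields have even degree. One caution: drop the claim that Zarhin gives $\dim_E T_X=1$ in the CM case, since it is false in general (for $H^1(C)\otimes H^1(C')$ with $C$ a CM elliptic curve and $C'$ an elliptic curve without CM, $E$ is imaginary quadratic and $\dim_E T=2$), though your parity argument, like the paper's, does not need it.
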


\begin{proof}
Simplicity of \(T_X\) follows
from \(h^{3,1}(T_X)=1\) and the absence of nonzero rational classes of
type \((2,2)\) in \(T_X\). Since \(T_X\) is simple and of K3 type,
Zarhin's theorem~\cite[Theorems~1.5.1 and~1.6(a)]{Zarhin1983} shows that $E:=\operatorname{End}_{\mathrm{Hdg}}(T_X)$ is either totally
real or CM. The action of $E$ on $T_X$ gives $\dim_{\Q}T_X=[E:\Q]\dim_E T_X$.  Thus $[E:\Q]$ is odd. A CM field has even degree over $\Q$, so $E$ must be totally real. 
\end{proof}
 


\begin{proposition}\label{prop:source}
Let $X$ be a rational cubic fourfold. Then there exist a projective K3 surface $S$ and an isomorphism of rational Hodge structures
$$
f\,\colon T_X(1)\xrightarrow{\sim}T_S.
$$
If, moreover, 
$$\operatorname{End}_{\mathrm{Hdg}}(T_{X}) =\mathbb{Q}\cdot\mathrm{id}_{T_X},$$
then there exists $c\in\Q_{>0}$ such that
\begin{equation}\label{eq:similitude}
q_S(fx,fy)=-c\,q_X(x,y)\qquad \forall\,x,y\in T_X(1).
\end{equation}
\end{proposition}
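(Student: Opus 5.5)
The first part is taken directly from the literature, as the introduction already indicates. By Guéré~\cite[Theorem~56]{guere}, which builds on the Hodge atoms of \cite{KKPY} and Iritani's blowup decomposition \cite{iritani2023quantum}, a rational cubic fourfold $X$ has its "K3-type atom" realized by a projective K3 surface. Concretely, the argument goes as follows. Weak factorization expresses the birational map $\PP^4\dashrightarrow X$ through blowups and blowdowns along smooth centers of dimension at most $2$. Each blowup along a center $Z$ adds to $H^4$, up to Tate twists, copies of $H^*(Z)$. The transcendental piece $T_X$ is simple, which follows from $h^{3,1}=1$ as in the proof of Lemma~\ref{lem:rm}. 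Since $T_X$ is not a summand of $H^*(\PP^4)$, it must appear, as a rational Hodge structure up to twist, inside the cohomology of some center. Weight and Hodge-level considerations ($h^{3,1}$ becoming $h^{2,0}$ after the twist by $(1)$) force that center to be a surface $S$ with $T_X(1)\hookrightarrow H^2(S,\Q)$. The atom theory then upgrades this to $S$ being a projective K3 surface with $T_X(1)\simeq T_S$. Because $T_X(1)$ is irreducible and contains $H^{2,0}$-type classes, its image is the minimal sub-Hodge structure containing $H^{2,0}(S)$, which is $T_S$. I would cite Guéré for this and only sketch the reasoning. This yields $f$.

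For the similitude statement, assume $\End_{\mathrm{Hdg}}(T_X)=\Q\cdot\id$ and transport the form: set $\psi(x,y):=q_S(fx,fy)$, a nondegenerate symmetric Hodge-compatible form on $T_X(1)$. Both $\psi$ and $-q_X$ are morphisms of Hodge structures $T_X(1)\otimes T_X(1)\to\Q(-2)$. The next step uses nondegeneracy of $-q_X$: there is a unique $\Q$-linear $\phi\colon T_X\to T_X$ with $\psi(x,y)=-q_X(\phi x,y)$. This $\phi=(-q_X)^{-1}\circ\psi$ is a composite of Hodge morphisms $T_X(1)\to T_X(1)^\vee(-2)\to T_X(1)$, so $\phi\in\End_{\mathrm{Hdg}}(T_X)=\Q$. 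Hence $\phi=c\cdot\id$ with $c\in\Q^\times$, giving \eqref{eq:similitude}.

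It remains to show $c>0$. For this, evaluate on the $(2,0)$ part of $T_X(1)$ (the former $H^{3,1}$). Take a generator $\sigma$. For K3 surfaces $q_S(f\sigma,\overline{f\sigma})>0$, since $f\sigma$ spans $H^{2,0}(S)$. By the Hodge–Riemann relations on $H^4(X)_{\mathrm{prim}}$, $q_X(\sigma,\bar\sigma)<0$ for $\sigma\in H^{3,1}$. Thus $-q_X(\sigma,\bar\sigma)>0$, which gives $c>0$. Alternatively, comparing signatures gives the same conclusion: $T_S$ has signature $(2,19)$ while $-q_X$ on $T_X$ has signature $(2,19)$, so $c<0$ is impossible since $21$ is odd. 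The main obstacle is the first step. It depends entirely on importing Guéré's atom-based theorem correctly, including that the surface produced is genuinely a K3 surface and that the isomorphism is with all of $T_S$, rather than merely an embedding into $H^2$ of some surface. The similitude step is formal.
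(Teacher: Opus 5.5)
Your proof is correct and follows the paper's route. You import Guéré's K3 realization and cut it down to $T_X(1)\simeq T_S$ using the minimality of the transcendental parts. You then write $f^*q_S=-q_X(\phi\,\cdot,\cdot)$ with $\phi$ a Hodge endomorphism, hence a scalar, and where the paper simply notes that both forms are polarizations, you check positivity explicitly by Hodge--Riemann on $H^{3,1}$. Your alternative signature argument tacitly assumes $\operatorname{rk}T_X=21$ (so $\rho(S)=1$), which the proposition does not assume and which is not available when $\operatorname{rk}T_X=4$; your Hodge--Riemann argument alone suffices in general.
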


\begin{proof}
By Gu\'er\'e's theorem \cite[Theorem 56]{guere}, rationality gives a projective K3 surface $S$ and a Hodge isomorphism 
\begin{equation*}
\varphi\,\colon H^4(X,\Q)_{\text{prim}}\xrightarrow{\sim}H^2(S,\Q)(-1).
\end{equation*}
Taking a Tate twist, $\varphi(1)$ identifies the $(2,0)$-lines on both sides. Since the transcendental part of a Hodge structure of K3 type is the smallest rational sub-Hodge structure containing this line, $\varphi(1)$ restricts to an isomorphism
$$
f\colon T_{X,\Q}(1)\xrightarrow{\sim}T(S)_{\Q}.
$$

The forms $-q_X$ and $f^*q_S$ are both polarizations of $T_X(1)$. Nondegeneracy gives a unique $a\in\End_{\mathrm{Hdg}}(T_{X})$ such that
$$
q_S(fx,fy)=-q_X(ax,y).
$$
Since both forms are Hodge tensors, $a$ is a Hodge endomorphism.
Thus
\[
a\in\operatorname{End}_{\mathrm{Hdg}}(T_X)
=\Q\cdot\mathrm{id}_{T_X},
\]
so $a=c\,\mathrm{id}_{T_X}$ for some $c\in\Q$. Since both forms are
polarizations, $c>0$. This proves \eqref{eq:similitude}.
\end{proof}

\begin{lemma}\label{lem:generic}
The very general member of every nonempty Hassett divisor $\mathcal{C}_d$ lies in $\mathcal{C}_d^{\mathrm{Hdg}}$.
\end{lemma}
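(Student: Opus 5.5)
We argue via a period-domain and Noether--Lefschetz count: the complement of $\mathcal C_d^{\mathrm{Hdg}}$ in $\mathcal C_d$ is a countable union of proper closed analytic (indeed algebraic) subvarieties. Recall that $\mathcal C_d$ is an irreducible divisor in the $20$-dimensional moduli space $\mathcal C$. By Voisin's Torelli theorem and Laza--Looijenga, the period map identifies $\mathcal C_d$ with an open subset of a locally symmetric variety $\Gamma_d\backslash \mathcal D_d$. Here $\mathcal D_d$ is the type IV period domain of dimension $19$ attached to the lattice $K_d^\perp\subset H^4(X,\Z)$, which has signature $(19,2)$ and rank $21$.

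\textbf{Step 1: Picard rank.} The condition $\rk A(X)_\Z\ge 3$ means that some primitive class $v\in K_d^{\perp}$ is of type $(2,2)$, that is, the period $\omega$ satisfies $(\omega,v)=0$. There are countably many such $v$. Each $v$ with $v^2>0$ cuts out a hyperplane section $v^\perp\cap\mathcal D_d$ of dimension $18$, while for $v^2\le 0$ the locus $v^\perp\cap\mathcal D_d$ is empty, by the signature. The images in $\mathcal C_d$ are proper closed subvarieties, being Hassett loci of rank-$3$ lattices, and their intersection with the image of the period map is proper. Hence a very general $X\in\mathcal C_d$ has $\rk A(X)_\Z=2$.

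\textbf{Step 2: Endomorphisms.} By Lemma~\ref{lem:rm}, for such $X$ the algebra $E=\End_{\mathrm{Hdg}}(T_X)$ is a totally real field of odd degree $e$ dividing $21$. Suppose $e>1$. Each $a\in E$ is self-adjoint for $q_X$, because totally real fields act by symmetric operators (Zarhin). Fix the finitely many possible $e\in\{3,7,21\}$ and enumerate the countably many rational symmetric endomorphisms $a$ of $T=K_d^\perp\otimes\Q$ that are not scalars. The locus of periods $\omega$ with $a\omega\in\CC\omega$ is then an analytic subvariety of $\mathcal D_d$. Since $a\omega=\lambda\omega$ with $\lambda$ real, $\omega$ lies in an eigenspace $V_\lambda\otimes\CC$ of $a$. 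This eigenspace is a proper subspace, because $a$ is not scalar, so the locus $\mathbb P(V_\lambda\otimes\CC)\cap\mathcal D_d$ is a proper closed subset. More precisely, $\dim V_\lambda\le 21/e\le 7$, so this locus has dimension at most $5<19$. Taking the countable union over all such $a$ and $\lambda$ gives a countable union of proper closed subsets. Any $X$ whose period avoids this union, and avoids the loci from Step 1, has $E=\Q$.

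\textbf{Step 3: Conclusion.} The period map $\mathcal C_d\to\Gamma_d\backslash\mathcal D_d$ is an open immersion onto its image, which is dense and $19$-dimensional. Therefore the preimage of a countable union of proper analytic subsets is a countable union of proper closed subsets of $\mathcal C_d$. A very general member thus lies in $\mathcal C_d^{\mathrm{Hdg}}$.

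The main obstacle is Step 2: one has to check that the eigenspace loci are genuinely proper, and that the condition ``$E$ is nontrivial'' is captured by countably many such loci. Both follow from the fact that any Hodge endomorphism preserves $H^{3,1}$, hence acts on the period line by a scalar. Alternatively, one can cite Zarhin's result that for a very general point of a type IV domain the Mumford--Tate group is the full $\mathrm{SO}(T)$, whose commutant in $\End(T)$ is $\Q$.
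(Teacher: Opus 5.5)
Your proposal is correct and is essentially the paper's argument: new algebraic classes are cut out by the hyperplanes $v^\perp$, a nonscalar rational endomorphism preserving the period line forces the period into one of finitely many proper eigenspaces, and a very general point avoids these countably many proper closed loci because the period map is an open immersion. The differences are cosmetic. You first restrict to self-adjoint $a$ via Zarhin, whereas the paper treats arbitrary nonscalar $M$ directly by an irreducibility and spanning argument. Your eigenspace bound should read $\dim V_\lambda = 21/[\Q(a):\Q]\le 7$ rather than $21/e$. Finally, to know that the endomorphism loci are algebraic in $\mathcal C_d$, and not merely analytic in the period domain, you should invoke Cattani--Deligne--Kaplan as the paper does.
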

\begin{proof} 

Let $\mathcal C_d^\circ\subset\mathcal C_d$ be a smooth dense open
subset. Let
$p\,\colon S\longrightarrow\mathcal C_d^\circ$ 
be a connected finite \'etale cover parameterizing labelled cubic fourfolds, and let $\pi\,\colon\mathcal X\to S$ be the universal family. The labelling induces a rank-two sub-local system
$\mathbb K\subset R^4\pi_*\ZZ$. Its orthogonal complement $\mathbb T_{\Q}:=(\mathbb K\otimes\Q)^\perp$ is a polarized variation of rational Hodge structures.

Fix $x_0 \in S$ and a simply connected analytic neighbourhood $U\subset S$ of $x_0$. The underlying local system $\mathbb T_{\Q}$ is constant over $U$, with fibre $T_{\Q}$.

Define the period domain
\[
\Omega_T=
\left\{
[\omega]\in\PP(T_{\C}) :
q(\omega,\omega)=0,\ 
q(\omega,\overline{\omega})<0
\right\},
\] 
and the period map
\[
\mathcal P\,\colon U\longrightarrow\Omega_T,
\qquad
x\longmapsto[\omega_x].
\]
For special cubic fourfolds, the period map is an open immersion
\cite[Theorem~2.2.1]{hassett}. 

Set
$$
U^{\mathrm{Hdg}}
:=
U\cap p^{-1}\bigl(\mathcal C_d^{\mathrm{Hdg}}\bigr).
$$
If $x\in U$ is not Hodge-general, then either a nonzero
$v\in T_{\Q}$ becomes of type $(2,2)$, or a nonscalar
$M\in\End_{\Q}(T_{\Q})$ becomes a Hodge endomorphism. Accordingly,
define
$$
\mathcal H_v
:=
\{x\in U:q(v,\omega_x)=0\}
$$
and
$$
\mathcal E_M
:=
\{x\in U:M\in\End_{\mathrm{Hdg}}(\mathbb T_{\Q,x})\}.
$$
Then
$$
U\setminus U^{\mathrm{Hdg}}
\subseteq
\bigcup_{v\in T_{\Q}\setminus\{0\}}\mathcal H_v
\cup
\bigcup_{M\in\End_{\Q}(T_{\Q})\setminus\Q\cdot\id}
\mathcal E_M.
$$

Let us show that these exceptional loci are contained in a strict closed analytic subset of $U$. For $v\neq0$, the hyperplane $q(v,\omega)=0$ defines a strict closed analytic subset of $\Omega_T$. Since the period map is an open immersion, $\mathcal H_v$ is a strict closed analytic subset of $U$.

Now suppose that $M$ is nonscalar. If $x\in\mathcal E_M$, then $M$ preserves the period line, so
$$
[\omega_x]\in
\bigcup_{\lambda\in\operatorname{Spec}(M)}
\PP\bigl(\ker(M-\lambda\id)\bigr).
$$
If this union contained an open subset of $\Omega_T$, its Zariski closure would contain the quadric $q=0$. By irreducibility, this quadric would lie in a single eigenspace of $M$. Since it spans $\PP(T_{\C})$, $M$ would then be scalar, which contradicts our initial assumption. Hence $\mathcal E_M$ is contained in a strict closed analytic subset of $U$.

Finally, $T_{\Q}$ and $\End_{\Q}(T_{\Q})$ are finite-dimensional over
$\Q$, so there are only countably many choices for $v$
and $M$. Hence $U\setminus U^{\mathrm{Hdg}}$ is contained in a countable union of strict closed analytic subsets of $U$. Such a union cannot cover a complex manifold, so $U^{\mathrm{Hdg}}\neq\varnothing$.

By Cattani--Deligne--Kaplan~\cite[Corollary~1.3]{CDK}, the irreducible components of the loci defined by nonzero flat sections in $\mathbb T_{\Q}(2)$ and nonscalar flat tensors in
$\mathbb T_{\mathbb Q}^{\vee}\otimes\mathbb T_{\mathbb Q}$ are
algebraic. The previous argument shows that they are strict. Since $p$ is finite, the non-Hodge-general locus is contained in the union of the closures of their images and the boundary $\mathcal C_d\setminus\mathcal C_d^\circ$. This is a countable union of strict closed subvarieties of $\mathcal C_d$. Hence a very general member of $\mathcal C_d$ lies in $\mathcal C_d^{\mathrm{Hdg}}$.

\end{proof}

\section{The Hasse--Witt Obstruction}\label{sec:hasse}

A quadratic form over $\Q$ is the pair $(V,\,q)$, where V is a finite--dimensional $\Q$--vector space, and $q: V \times V\rightarrow \Q$ is a nondegenerate symmetric bilinear form. After choosing an orthogonal basis, we may write \[
q=\langle a_1,\ldots,a_n\rangle\qquad a_i\in\QQ^\times.
\] 
The determinant and Hasse invariant of $q$ are defined respectively as
\[
\delta(q):=\prod_{i=1}^n a_i
\in\QQ^\times/\QQ^{\times 2},
\qquad
w(q):=\sum_{i<j}(a_i,a_j)
\in\Br(\QQ)[2].
\] 
Here $(a,b)\in\Br(\QQ)[2]$ denotes the Brauer class of the quaternion algebra $$H(a,b)=\Q\langle i,j\rangle/(i^2-a,j^2-b,ij+ji)$$ where $\Br(\QQ)$ is the group of Brauer--equivalence classes of
central simple $\QQ$--algebras. We use additive notation, which means that when  $(a,b)=0$, the algebra is split; equivalently $H(a,b)\simeq M_2(\QQ)$.

For each place $v$ of $\QQ$, let $(a,b)_v\in\{\pm1\}$ denote the local Hilbert symbol, so that
\[
(a,b)_v=1
\quad\Longleftrightarrow\quad
H(a,b)\otimes_{\QQ}\QQ_v\simeq M_2(\QQ_v).
\]
In particular, a single place with $(a,b)_v=-1$ proves that $(a,b)\neq0$ in $\Br(\QQ)[2]$.

We will make repeated use of the following identities throughout the remainder of the section. Let $r$ be another nondegenerate quadratic form over $\Q$, let $m=\dim_{\Q}q$, and let $\lambda\in\QQ^\times$, then
\begin{align}
w(q\perp r)
&=w(q)+w(r)+\bigl(\delta(q),\delta(r)\bigr),
\label{eq:sum}\\
w(\lambda q)
&=w(q)+\binom{m}{2}(\lambda,\lambda)
+(m-1)\bigl(\lambda,\delta(q)\bigr).
\label{eq:scale}\end{align} 
Furthermore, two quadratic forms over $\Q$ are isomorphic if and only if they have the same dimension, signature, determinant, and Hasse invariant. See
\cite[Chapter~III]{Se} for this and more details on quaternion algebras and local Hilbert symbols.  Throughout the remainder of the paper we abbreviate
\[
w(T_X):=w(q_X\vert_{T_X}),
\qquad
w(-T_S):=w((-q_S)\vert_{T_S}).
\]

\begin{remark}\label{rem:rank}
For $m=21$ both $\binom{21}{2}=210$ and $m-1=20$ are even, so $w(\lambda q)=w(q)$ for every $\lambda\in\Q^{\times}$. A similitude with multiplier $\lambda$ is an isometry onto $\lambda q$, so in rank-$21$ the Hasse class is similitude-invariant. For cubic fourfolds, this holds when $m=\operatorname{rk} T_X \equiv 1 \pmod 4$, or equivalently, $r\equiv 2 \pmod 4$, where $r=\operatorname{rk}A(X)$.
\end{remark}


\begin{lemma}\label{lem:k3-hasse}
If $S$ is a projective K3 surface with $\rho(S)=1$, then
\begin{equation*}
w(-T_S)=(-1,-1).
\end{equation*}
\end{lemma}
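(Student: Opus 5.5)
The plan is to compute $w(T_S)$ directly from the rational isometry class of the K3 lattice, using $\delta$, the sum formula \eqref{eq:sum}, and Remark~\ref{rem:rank}. Since $\rho(S)=1$, we have $\NS(S)\otimes\Q=\langle 2n\rangle$ for some $n\ge1$ (the polarization degree), and $T_S=\NS(S)^\perp_\Q$ has rank $21$. The decomposition is then
\[
H^2(S,\Q)=\langle 2n\rangle\perp T_S .
\]

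\emph{Step 1: the rational form on $H^2(S,\Q)$.} The lattice $H^2(S,\Z)$ is $U^{\oplus3}\oplus E_8(-1)^{\oplus2}$. I will show that over $\Q$
\[
H^2(S,\Q)\simeq \langle1\rangle^{\oplus3}\perp\langle-1\rangle^{\oplus19}.
\]
Over $\Q$ we have $U\simeq\langle1,-1\rangle$. For $E_8(-1)$ I will use the classification of odd indefinite unimodular lattices: $E_8(-1)\oplus\langle1\rangle$ is odd, indefinite and unimodular of signature $(1,8)$, so it is $\Z$-isometric to $\langle1\rangle\oplus\langle-1\rangle^{\oplus8}$. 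Witt cancellation over $\Q$ then gives $E_8(-1)\simeq\langle-1\rangle^{\oplus8}$. Consequently
\[
\delta(H^2)=(-1)^{19}=-1,\qquad w(H^2)=\binom{19}{2}(-1,-1)=(-1,-1),
\]
since $171$ is odd and pairs involving a $+1$ contribute trivially. This identification is the only non-formal input, and I expect it to be the main obstacle. If the classification route proves awkward, I would instead check local invariants directly: trivial at odd $p$ by unimodularity, and at $\infty$ by the signature. The place $2$ would then be forced by Hilbert reciprocity.

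\emph{Step 2: extract $w(T_S)$.} The determinant is $\delta(T_S)=\delta(H^2)\cdot 2n=-2n$ modulo squares. Applying \eqref{eq:sum} to $\langle2n\rangle\perp T_S$ gives
\[
w(H^2)=w(\langle 2n\rangle)+w(T_S)+(2n,-2n)=w(T_S),
\]
since a rank-one form has trivial Hasse invariant and $(a,-a)=0$. Hence $w(T_S)=(-1,-1)$.

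\emph{Step 3: change the sign.} Since $\dim T_S=21$, Remark~\ref{rem:rank} (formula \eqref{eq:scale} with $\lambda=-1$ and $m=21$) gives $w(-T_S)=w(T_S)=(-1,-1)$, which is the claim. The value does not depend on $n$. As a sanity check, the signature of $-q_S|_{T_S}$ is $(19,2)$, which matches $T_X(1)$ in the Hodge-general case.
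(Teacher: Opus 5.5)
Your proof is correct and follows the same route as the paper. Like the paper, you compute $w\bigl(H^2(S,\Q)\bigr)=(-1,-1)$ from the K3 lattice, split off $\NS(S)_\Q=\langle 2n\rangle$ using $(a,-a)=0$, and then use rank-$21$ scale invariance. The one difference is in Step~1, where the paper sidesteps your ``main obstacle'' entirely: since $\delta(E_8(-1))=1$ and $w$ is $2$-torsion, \eqref{eq:sum} gives $w\bigl(E_8(-1)^{\oplus 2}\bigr)=2w(E_8(-1))+(1,1)=0$ without identifying $E_8(-1)_\Q$ at all. Your appeal to the classification of odd indefinite unimodular lattices (or to local invariants plus reciprocity) is therefore valid but heavier than needed.
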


\begin{proof}
Let $V=H^2(S,\Q)$. Recall the K3 lattice $\Lambda\simeq U^{\oplus3}\perp E_8(-1)^{\oplus2}$. 

Over $\Q$, we have $U_{\Q}\simeq\langle1,-1\rangle$. Applying formula \eqref{eq:sum} gives $\delta(U^{\oplus3})=-1$ and $w(U^{\oplus3})=(-1,-1)$. 

Let $E=E_8(-1)$. Since $\delta(E)=1$ and the Hasse invariant is
$2$-torsion, we have $w(E\perp E)=2w(E)+(1,1)=0$. Consequently, $\delta(V)=-1$ and $w(V)=(-1,-1)$.

Since $\rho(S)=1$, recall that we have $\NS(S)=\Z\ell$, and set $n=q_S(\ell,\ell)\in2\Z_{>0}$. Then $V=\langle n\rangle\perp T_S$. 
It follows that $\delta(T_S)=-n$ in $\Q^\times/\Q^{\times2}$. Hence $w(V)=w(T_S)+(n,-n)=w(T_S)$ because $(n,-n)=0$. Thus $w(T_S)=(-1,-1)$. Since $\operatorname{dim}_\Q(T_S)=m=21$, formula \eqref{eq:scale} gives $w(-T_S)=w(T_S)$. 
\end{proof}

\begin{proposition}\label{prop:hasse}
Let $X\in\mathcal C_d^{\mathrm{Hdg}}$ be rational. Then $\beta_d=0$.
\end{proposition}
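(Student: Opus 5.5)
The plan is to compare Hasse invariants of two rational quadratic spaces of rank $21$ that Proposition~\ref{prop:source} identifies up to similitude. On one side is $(T_X,q_X)$, whose Hasse class we will compute from the lattice $K_d$. On the other is $(T_S,-q_S)$, whose Hasse class is given by Lemma~\ref{lem:k3-hasse}.

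\textbf{Step 1 (K3 side).} Since $X\in\mathcal C_d^{\mathrm{Hdg}}$ is rational, Proposition~\ref{prop:source} gives a projective K3 surface $S$ and a Hodge isomorphism $f\colon T_X(1)\xrightarrow{\sim}T_S$ with $q_S(fx,fy)=-c\,q_X(x,y)$ for some $c\in\Q_{>0}$. Because $\rk A(X)_\Z=2$, we have $\dim T_X=21$. Hence $\dim T_S=21$ and $\rho(S)=1$, so Lemma~\ref{lem:k3-hasse} applies and gives $w(-T_S)=(-1,-1)$.

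\textbf{Step 2 (transfer along $f$).} The map $f$ is an isometry from $(T_X,c\,q_X)$ onto $(T_S,-q_S)$. By Remark~\ref{rem:rank}, in rank $21$ scaling does not change the Hasse class. Therefore
\[
w(T_X)=w(-T_S)=(-1,-1).
\]

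\textbf{Step 3 (cubic side).} We compute $w(T_X)$ directly from the decomposition $H^4(X,\Q)=A(X)\perp T_X$, where $A(X)=K_d\otimes\Q$.
\begin{itemize}
\item The lattice $H^4(X,\Z)$ is odd unimodular of signature $(21,2)$. Over $\Q$ it is $\langle1\rangle^{21}\perp\langle-1\rangle^{2}$, so $\delta=1$ and $w=(-1,-1)$.
\item The lattice $K_d$ contains $h^2$ with $q(h^2)=3$ and has discriminant $d$. Over $\Q$ this gives $K_d\simeq\langle 3,d/3\rangle$, so $\delta(K_d)=d$ and $w(K_d)=(3,d/3)=(3,-d)$.
\item Consequently $\delta(T_X)=d$ modulo squares.
\end{itemize}
Applying \eqref{eq:sum} gives
\[
(-1,-1)=(3,-d)+w(T_X)+(d,d),\qquad (d,d)=(d,-1).
\]
Comparing with Step 2 shows that rationality forces
\[
(3,-d)+(d,-1)=0.
\]

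\textbf{Step 4 (symbol identity).} It remains to identify this class with $\beta_d$. Bilinearity gives
\[
(3,-d)+(d,-1)=(-1,3)+(d,-3),\qquad \beta_d=(d/2,-3)=(d,-3)+(2,-3).
\]
So it suffices to show $(-1,3)=(2,-3)$ in $\Br(\Q)[2]$. We check this on local Hilbert symbols:
\begin{itemize}
\item at $v=\infty$ both symbols are $+1$, since $3>0$ and $2>0$;
\item at primes $p\ne2,3$ both are $+1$, since all entries are $p$-adic units;
\item at $p=3$ we get $(-1,3)_3=\left(\tfrac{-1}{3}\right)=-1$ and $(2,-3)_3=\left(\tfrac{2}{3}\right)=-1$;
\item at $p=2$ we get $(-1,3)_2=-1$ and $(2,-3)_2=(2,5)_2=-1$.
\end{itemize}
Since a class in $\Br(\Q)$ is determined by its local invariants, the two classes agree. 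Hence the condition from Step 3 is exactly $\beta_d=0$.

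The main obstacle is bookkeeping of signs and conventions rather than any conceptual point:
\begin{itemize}
\item the Tate twist makes the relevant form $-q_X$, and the multiplier is $-c$; both are harmless here by Remark~\ref{rem:rank};
\item the identification $K_d\simeq\langle3,d/3\rangle$ must be checked against Hassett's definition of discriminant;
\item the $2$-adic Hilbert symbols must be evaluated with care.
\end{itemize}
I would double-check the final identity by testing $d=12$, where it should give $\beta_{12}\ne0$, and $d=14$, where it should give $\beta_{14}=0$.
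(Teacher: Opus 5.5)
Your proposal is correct and follows the paper's proof step for step. You compare $w(T_X)$, computed from $H^4(X,\Q)=A(X)\perp T_X$ with $A(X)\simeq\langle 3,d/3\rangle$, against $w(-T_S)=(-1,-1)$ via rank-$21$ similitude invariance, giving $(3,d/3)+(d,-1)=0$. The only difference is cosmetic: you check $(-1,3)=(2,-3)$ with local Hilbert symbols, whereas the paper derives it from the Steinberg relations $(2,-1)=0$ and $(3,-2)=0$.
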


\begin{proof}
Let $S$, $f$, and $c$ be as in Proposition~2.3. By Hassett~\citep[\S2]{hassett}, the integral middle cohomology is $H^4(X,\Z)\simeq\langle1\rangle^{21}\perp\langle-1\rangle^{2}$. Thus it has rank-23 and signature $(21,2)$. Consequently, $L=H^4(X,\Q)\simeq\langle1\rangle_\Q^{21}\perp\langle-1\rangle_\Q^{2}$. Therefore $\delta(L)=1$ and $w(L)=(-1,-1)$. 

The class $h^2$ in $A(X)$ has square $3$ and $A(X)_{\Z}\,\simeq K_d$ has discriminant $d$. Orthogonalizing $h^2$ in $A(X)$ therefore gives
\[
A(X)\simeq{_\Q}\langle3,\,d/3\rangle,\qquad \delta\bigl(A(X)\bigr) = d,\qquad \delta(T_X) = d .
\]
Applying \eqref{eq:sum} to $L=A(X)\perp T_X$ gives $$w(L)=(3,d/3)+w(T_X)+(d,d).$$  Since $(d,d)=(d,-1)$, it follows that
\begin{equation}\label{eq:wT}
w(T_X)=w(L)+(3,d/3)+(d,-1).
\end{equation}
Since $\operatorname{rk}T_S=\operatorname{rk}T_X=21$, we have $\rho(S)=1$, so Lemma~3.2 applies. 

By Equation~\eqref{eq:similitude}, $f$ is an isometry from $\bigl(T_X,\,-c(\,{,}\,)_X\bigr)$ onto $\bigl(T_S,(\,{,}\,)_S\bigr)$. Given Formula~\eqref{eq:scale} and the fact that the forms have rank-$21$, we have
\[
w(T_X)=w(T_S)=w(-T_S)=w(L).
\]
Comparing with \eqref{eq:wT} gives $(3,d/3)+(d,-1)=0$. The Steinberg relation $(a,1-a)=0$ gives $(2,-1)=0$ and $(3,-2)=0$, hence $(3,-1)=(3,2)=(2,3)=(2,-3)$. Therefore
\[
(3,d/3)+(d,-1)=(3,d)+(3,-1)+(d,-1)=(d,-3)+(2,-3)=(2d,-3)=\bigl(\tfrac d2,-3\bigr),
\]
using $d/3\equiv3d$ and $2d\equiv d/2$ modulo squares. Thus $\beta_d=0$.
\end{proof}

\begin{proof}[Proof of Theorem~\ref{thm:main}] The first statement is Proposition~\ref{prop:hasse}. Suppose now that $\beta_d\ne0$. The same proposition shows that no rational member of $\mathcal C_d$ lies in $\mathcal C_d^{\mathrm{Hdg}}$; hence $ \Rat\cap\mathcal C_d\subseteq \mathcal C_d\setminus\mathcal C_d^{\mathrm{Hdg}}$. Since every member of $\mathcal C_d$ has algebraic rank at least two, the complement on the right is the union in \eqref{eq:containment}. By Lemma~\ref{lem:generic}, a very general member of $\mathcal C_d$ then lies outside the rational locus. \end{proof}

 Hassett showed that $\mathcal C_{12}$ is the
closure of the locus of cubic fourfolds containing a smooth cubic scroll
\cite[Example~4.1.2]{hassett}. To our knowledge, this gives the first irrationality result for the very general member of the cubic-scroll divisor.

\begin{proof}[Proof of Corollary~\ref{cor:c12}] 
For $d=12$, we have $\beta_{12}=(6,-3)$ and $(6,-3)_2=(2,-3)_2=-1$. Thus $\beta_{12}\neq0$, and the first assertion follows from Theorem~\ref{thm:main}.
\end{proof}

\begin{corollary}
For every prime $p\equiv5\pmod6$, the very general member of
$\mathcal C_{4p}$ is irrational. In particular, this holds for
infinitely many Hassett divisors.
\end{corollary}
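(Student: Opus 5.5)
We apply Theorem~\ref{thm:main} with $d=4p$. It then suffices to check two things: that $\mathcal C_{4p}$ is nonempty, i.e.\ that $d=4p$ satisfies \eqref{eq}, and that $\beta_{4p}=(2p,-3)\neq 0$ in $\Br(\Q)[2]$.

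For the first point, $p\equiv 5\pmod 6$ gives $4p\equiv 20\equiv 2\pmod 6$. Also $4p\ge 20>6$, so $\mathcal C_{4p}$ is a nonempty Hassett divisor.

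For the second point, we use the norm criterion for $\Q(\sqrt{-3})/\Q$ recorded after Theorem~\ref{thm:main}: $\beta_d=0$ if and only if $v_q(d/2)$ is even for every prime $q\equiv 2\pmod 3$. Here $d/2=2p$. Since $p\equiv 5\pmod 6$, we have $p\equiv 2\pmod 3$, and $v_p(2p)=1$ is odd, so $\beta_{4p}\neq 0$. For a self-contained check we can compute a local Hilbert symbol at $p$ instead. The prime $p$ is odd and coprime to $6$, and $v_p(2p)=1$, $v_p(-3)=0$, so
\[
(2p,-3)_p=\Bigl(\frac{-3}{p}\Bigr).
\]
By quadratic reciprocity, $\bigl(\frac{-3}{p}\bigr)=\bigl(\frac{p}{3}\bigr)$, which equals $-1$ because $p\equiv 2\pmod 3$. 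A single place with Hilbert symbol $-1$ shows $\beta_{4p}\neq 0$. (When $p=5$ this recovers the case $d=20$ of Corollary~\ref{cor:c20}.)

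Theorem~\ref{thm:main} now shows that a very general member of $\mathcal C_{4p}$ is irrational. By Dirichlet's theorem there are infinitely many primes $p\equiv 5\pmod 6$. The values $4p$ are pairwise distinct, so they give infinitely many distinct divisors. No step is a real obstacle. The only point needing care is the reciprocity evaluation of $\bigl(\frac{-3}{p}\bigr)$; alternatively, one can just invoke the norm criterion directly, since $p$ divides $2p$ to an odd power.
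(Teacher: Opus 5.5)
Your proof is correct and follows the paper's argument: check $4p\equiv 2\pmod 6$ so $\mathcal C_{4p}$ is nonempty, use $v_p(2p)=1$ with $p\equiv 2\pmod 3$ to get $\beta_{4p}\neq 0$, apply Theorem~\ref{thm:main}, and invoke Dirichlet. Your additional local check $(2p,-3)_p=\bigl(\frac{-3}{p}\bigr)=\bigl(\frac{p}{3}\bigr)=-1$ is correct. It is a self-contained substitute for the norm criterion, which the paper simply cites.
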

\begin{proof}
Let $d=4p$, where $p\equiv5\pmod6$ is prime. Then $d\equiv2\pmod6$, so $\mathcal C_d$ is nonempty, while $v_p(d/2)=1$ and $p\equiv2\pmod3$ imply $\beta_d\ne0$. Dirichlet's theorem on primes in arithmetic progressions implies that there are infinitely many primes $p\equiv5\pmod6$ and therefore infinitely many Hassett divisors $\mathcal C_{d}$ to which Theorem 1.1 applies.
\end{proof}

\begin{remark}
Fan--Lai ~\cite[Theorems~1.3 and 3.11]{Fan2020NewRC} construct rational subvarieties inside $\mathcal{C}_{20}$. However, their general members have algebraic rank-three and therefore lie in the exceptional locus \eqref{eq:containment} from~Theorem~\ref{thm:main}.
\end{remark}

\begin{proof}[Proof of Corollary \textup{\ref{cor:rational-addington}}]
Suppose that (1) holds. By
\cite[Theorem~1.3 and Equation~1.1]{Huybrechts2015TheKC}, there exists an integral Hodge isometry $T_{X,\Z}(1)\simeq T(S,\alpha, \Z)$. As recalled in Equation~\ref{eq:hub}, the twisted transcendental lattice is isometric to
$$
T(S,\alpha,\Z)
\simeq
\ker\bigl(\alpha\,\colon T_{S,\Z}\longrightarrow\Q/\Z\bigr).
$$
Since $\alpha$ is torsion, its image is finite, so $T(S,\alpha,\Z)\subset T_{S,\Z}$ has finite index. Therefore $T(S,\alpha,\Z)\otimes \Q =T_{S,\Z} \otimes \Q = T(S)_\Q$, proving (2). 

Conversely, suppose that (2) holds. After tensoring with $\Q$, the Mukai lattices decompose orthogonally as
$$
\widetilde{H}(\mathcal{A}_X, \Q) = N(\mathcal{A}_X)_{\Q} \perp T(\mathcal{A}_X)_{\Q}, \qquad 
\widetilde{H}(S, \Q) = \widetilde{\NS}(S)_{\Q} \perp T(S)_{\Q}.
$$ Because these Mukai lattices are isometric as rational quadratic spaces and, under the standard identification $T(\mathcal A_X)_\Q\simeq T_X(1)$, their transcendental parts are isometric by assumption, Witt's cancellation theorem~\cite[Theorem~2.1]{chebolu2016wittscancellationtheoremseen} then gives an isometry of their algebraic parts 
$$\psi\,\colon \widetilde{\NS}(S)_{\Q} \xrightarrow{\sim} N(\mathcal{A}_X)_{\Q}.$$
The extended rational Néron-Severi lattice decomposes orthogonally as
$$
\widetilde{\NS}(S)_{\Q} = \NS(S)_{\Q} \perp \bigl(H^0(S,\Q) \oplus H^4(S,\Q)\bigr).
$$
Choose $e=(1,0,0)$ and $f=(0,0,-1)$; under the Mukai pairing $\langle e,e\rangle=\langle f,f\rangle=0$ and $\langle e,f\rangle=1$. Hence $H^0(S,\Q) \oplus H^4(S,\Q)=\Q e \, \oplus \Q f \simeq U_\Q$. Since $H^0(S,\Q)$ and $H^4(S,\Q)$ are of Mukai type $(1,1)$,  this gives $U_\Q\hookrightarrow N(\mathcal A_X)_\Q$. 

Put $u=\psi(e)$ and $v=\psi(f)$, and choose $m\in\Z_{>0}$ such that $mu,mv\in N(\mathcal A_X)$. These vectors then define a (not necessarily primitive) embedding of the twisted hyperbolic plane
$$U(m^2) \hookrightarrow N(\mathcal{A}_X).$$
By \cite[Lemma~2.6(ii) and Proposition~2.17]{Huybrechts2015TheKC}, the existence of such
an embedding implies that there exist a
projective K3 surface $S'$ and a Brauer class
$\alpha'\in\operatorname{Br}(S')$ together with an integral Hodge
isometry
$$
\widetilde H(\mathcal A_X,\mathbb Z)
 \simeq
\widetilde H(S',\alpha',\mathbb Z).
$$
Huybrechts \cite[Theorem~1.3]{Huybrechts2015TheKC} then implies that
$X\in\mathcal C_{d'}$ for some discriminant $d\,'$ satisfying $(**')$. Because
$\operatorname{rk}A(X)_{\mathbb Z}=2$, every primitive rank-two
algebraic lattice containing $h^2$ must equal $A(X)_{\mathbb Z}$.
Since $X\in\mathcal C_d\cap\mathcal C_{d'}$, it follows that
$d\,'=d$. Consequently, $d$ satisfies $(**')$, or equivalently
$\beta_d=0$.

The equivalence of (1) and (3) follows from
\cite[Proposition~4.1]{Huybrechts2015TheKC}, together with the
rank-two uniqueness argument just given. The final assertion follows from Theorem~\ref{thm:main}.
\end{proof}

\section{The Veronese divisor and K\"uchle fourfolds of type $(c7)$}\label{sec:veronese}
Let $V=\nu_2(\PP^2)\subset\PP^5$ be a Veronese surface, and let
$X\subset\PP^5$ be a smooth cubic fourfold containing $V$.
Hassett shows that the saturated lattice
\[
K_{20}:=\langle h^2,[V]\rangle\subset A(X)_{\Z},
\]
has discriminant $d=20$, and hence $[X]\in\mathcal C_{20}$; see ~\cite[Example~4.1.4]{hassett}. 

Fixing $V$, let
\[
U_{20}\subset|\mathcal I_V(3)|
\]
be the open subset parametrizing smooth cubic fourfolds containing $V$.
Fan--Lai prove that the natural morphism
\[
\varphi\colon
[U_{20}/\operatorname{PGL}_3(\CC)]
\longrightarrow\mathcal C_{20}
\]
is birational and that its image contains
$\mathcal C_{20}\setminus\mathcal C_8$
\cite[Proposition~2.1]{Fan2020NewRC}. In particular, a very general
member of $\mathcal C_{20}$ contains a Veronese surface.

Kuznetsov proves that a smooth K\"uchle fourfold $Y$ of type
$(\mathrm{c7})$ is isomorphic to $\Bl_V(X)$
\cite[Corollary~4.11]{Kuznetsov2015OnKV}. This then identifies
an open family of K\"uchle fourfolds of type $(\mathrm{c7})$ with an
open subset of $U_{20}$. Since $\varphi$ is birational, a very general
$Y$ is therefore birational to a very general
$X\in\mathcal C_{20}$.


\begin{proof}[Proof of Corollary \textup{\ref{cor:c20}}]
For $X\in\mathcal C_{20}$, we have $\beta_{20}=(10,-3)$, and $(10,-3)_5=-1$. Thus $\beta_{20}\neq0$, and Theorem \ref{thm:main} applies. Consequently, the very general K\"uchle fourfold of type $(\mathrm{c7})$ is irrational.\end{proof}

The K\"uchle fourfold of type $(\mathrm{c7})$ is one of the
three families in K\"uchle's classification, alongside the fourfolds of type $(\mathrm{c5})$ and $(\mathrm{d3})$, whose middle cohomology is of
K3 type \cite[Theorem~3.1]{kuchle}. Benedetti--Fay--Gu\'er\'e--Manivel--Perrin recently proved that the very general fourfold of type $(\mathrm{c5})$ is irrational
\cite[Theorem~4.1]{c5}. Their criterion does not apply here as it requires $b_4(Y)_{\mathrm{van}}\geq22$, whereas
the vanishing cohomology of a fourfold of type $(\mathrm{c7})$ has rank-$21$.

\section{Noether--Lefschetz divisors of Gushel--Mukai fourfolds} 
An ordinary Gushel--Mukai fourfold is a smooth fourfold of the form
\[
X=\Gr(2,5)\cap\PP^8\cap Q\subset\PP^9,
\]
where $\Gr(2,5)\subset\PP^9$ is the Pl\"ucker embedding and
$Q\subset\PP^8$ is a quadric hypersurface. Let
\[
A(X)_\Z:=H^4(X,\Z)\cap H^{2,2}(X),
\qquad
A(X) := A(X)_\ZZ \otimes_{\Z} \Q, \qquad
T_X:=A(X)^{\perp}\subset H^4(X,\Q).
\]
The restricted Schubert classes span the primitive ambient lattice
\[
B_X:=\Z\sigma_{1,1}\oplus\Z\sigma_2\subset A(X)_\Z.
\]
In this basis its Gram matrix is
\[
\begin{pmatrix}
2&2\\
2&4
\end{pmatrix}.
\]
Following Debarre--Iliev--Manivel~\cite{debarre2015special}, we call $X$ \emph{Hodge--special} if
$\operatorname{rk}A(X)\geq 3$, or equivalently, if $A(X)$ contains a
class not coming from $\operatorname{Gr}(2,5)$. For a primitive positive-definite rank-$3$ lattice
$K\subset A(X)_\Z$ containing $B_X$, let $d$ be the discriminant of $K$.

By \cite[Corollary~6.3]{debarre2015special}, there is one
irreducible period divisor when $d\equiv0\pmod4$, while there are two when $d\equiv2\pmod8$. 
Let $\mathcal D_d$ be an irreducible component of the
discriminant--$d$ locus, and define
\[
\mathcal D_d^{\mathrm{Hdg}}
:=
\left\{
X\in\mathcal D_d:
\operatorname{rk}A(X)_{\Z}=3,\quad
\operatorname{End}_{\mathrm{Hdg}}(T_X)
=\Q\cdot\operatorname{id}_{T_X}
\right\}.
\]
Set
\[
\gamma_d:=(d,-1)\in\operatorname{Br}(\Q)[2].
\]
Note that, for $d>0$, the equality $\gamma_d=0$ is equivalent to $d=a^2+b^2$ for some $a$,~$    b\in\Z$.  

\begin{theorem}\label{thm:gm}
If $X\in\mathcal D_d^{\mathrm{Hdg}}$ is rational, then $\gamma_d=0$.
\end{theorem}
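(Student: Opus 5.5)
The plan is to repeat the argument of Proposition~\ref{prop:hasse}, now for the Gushel--Mukai lattice. First I would get a K3 realization and a similitude. By Benedetti--Manivel--Perrin~\cite[Theorem~13]{benedetti2026quantumcohomologyirrationalitygushelmukai}, rationality of $X$ should give a projective K3 surface $S$ and a Hodge isomorphism from the vanishing cohomology $B_X^\perp\subset H^4(X,\Q)$ onto $H^2(S,\Q)(-1)$, possibly after passing to primitive or transcendental parts. The argument of Proposition~\ref{prop:source} then applies verbatim. The isomorphism restricts to $f\colon T_X(1)\xrightarrow{\sim}T_S$, since both sides are the smallest sub-Hodge structures containing the $(2,0)$-line. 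Because $\End_{\mathrm{Hdg}}(T_X)=\Q\cdot\id$, $f$ is a similitude with $q_S(fx,fy)=-c\,q_X(x,y)$ for some $c\in\Q_{>0}$.

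Next I compare ranks and read off the Hasse class on the K3 side. By Debarre--Iliev--Manivel, $H^4(X,\Z)$ is the odd unimodular lattice $\langle1\rangle^{22}\perp\langle-1\rangle^{2}$. With $\rk A(X)_\Z=3$, the space $T_X$ has rank $21$ and signature $(19,2)$. Hence $\rk T_S=21$, so $\rho(S)=1$ and Lemma~\ref{lem:k3-hasse} gives $w(-T_S)=(-1,-1)$. By Remark~\ref{rem:rank}, in rank $21$ the Hasse class is invariant under similitudes and under sign change. Therefore
\[
w(T_X)=w(-T_S)=(-1,-1).
\]

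Finally I compute $w(T_X)$ directly. For $L=H^4(X,\Q)$ we have $\delta(L)=1$ and $w(L)=(-1,-1)$. I orthogonalize $B_X$ using $\sigma_{1,1}$ and $\sigma_2-\sigma_{1,1}$. These have squares $2$ and $2$ and pairing $0$, so $B_X\otimes\Q\simeq\langle2,2\rangle$ with determinant $4$. Since $A(X)$ has discriminant $d$, the extra orthogonal vector has square $d/4$, giving
\[
A(X)\simeq\langle2,2,d/4\rangle,\qquad \delta(A(X))=d,\qquad \delta(T_X)=d.
\]
Its Hasse invariant is $w(A(X))=(2,2)+2(2,d/4)=(2,-1)=0$. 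Applying \eqref{eq:sum} to $L=A(X)\perp T_X$ gives $w(L)=w(T_X)+(d,d)$, hence $w(T_X)=(-1,-1)+(d,-1)$. Comparing with the previous step yields $\gamma_d=(d,-1)=0$.

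I expect the main obstacle to be the first step. One must check that the cited theorem indeed produces a rational Hodge isomorphism involving the full vanishing lattice or the transcendental part, rather than some other piece. The Hodge-general hypothesis (rank $3$ and trivial endomorphisms) is exactly what makes the similitude argument and the rank-$21$ invariance available. The lattice computation itself is routine once the unimodular model of $H^4(X,\Z)$ is fixed.
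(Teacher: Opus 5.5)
Your proposal is correct and follows essentially the same route as the paper: the Benedetti--Manivel--Perrin K3 realization, the similitude argument of Proposition~\ref{prop:source} giving $\rho(S)=1$ and hence $w(T_X)=(-1,-1)$ via Lemma~\ref{lem:k3-hasse} and rank-$21$ invariance, compared against the direct computation $w(T_X)=(-1,-1)+(d,-1)$. The only cosmetic difference is that you split $H^4(X,\Q)=A(X)\perp T_X$ with $A(X)\simeq\langle 2,2,d/4\rangle$, whereas the paper first splits off $B_{X,\Q}\simeq\langle 2,2\rangle$ and then writes $P_X\simeq\langle d\rangle\perp T_X$; both give the same result.
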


\begin{proof}
Set
\[
L_X:=H^4(X,\Q),
\qquad
B_{X,\Q}:=B_X\otimes_{\Z}\Q,
\qquad
P_X:=B_{X,\Q}^{\perp}\subset L_X.
\]
By \cite[\S5.1]{debarre2015special},
\[
L_X\simeq
\langle1\rangle^{\oplus22}\perp\langle-1\rangle^{\oplus2},
\qquad
B_{X,\Q}\simeq\langle2,2\rangle.
\]
Consequently,
\[
\delta(L_X)=\delta(B_{X,\Q})=1,
\qquad
w(L_X)=(-1,-1),
\qquad
w(B_{X,\Q})=0.
\]
Since $L_X=B_{X,\Q}\perp P_X$, 
\eqref{eq:sum} gives
\[
\delta(P_X)=1,
\qquad
w(P_X)=(-1,-1).
\]

Write $N_X:=A(X)\cap P_X$ and $A(X)=B_{X,\Q}\perp N_X$. Then
\[
P_X=N_X\perp T_X\simeq\langle d\rangle\perp T_X,
\qquad
\delta(T_X)=d.
\]
Applying \eqref{eq:sum} again  gives
\begin{equation}\label{eq:gm-hasse}
w(T_X)=(-1,-1)+(d,-1).
\end{equation}

Suppose that $X$ is rational. By Benedetti--Manivel--Perrin
\cite[Theorem~13]{benedetti2026quantumcohomologyirrationalitygushelmukai}, 
there exists a projective K3 surface $S$ and a Hodge isomorphism
$\varphi\colon P_X(1)\xrightarrow{\sim}H^2(S,\Q).$

Since $X\in\mathcal D_d^{\mathrm{Hdg}}$, the argument of
Proposition~\ref{prop:source} gives $\rho(S)=1$ and a similitude
$T_X(1)\simeq T_S$. Hence $w(T_X)=(-1,-1)$. Comparison with \eqref{eq:gm-hasse} gives
$(d,-1)=0$.
\end{proof}

The condition $\gamma_d=0$ is equivalent to every prime
$p\equiv3\pmod4$ occurring in $d$ with even exponent. This is
Pertusi's condition $(**')$, which is equivalent to $X$ having a
cohomologically associated twisted K3 surface
\cite[Theorem~1.1]{pertusi2019double}. Our obstruction is therefore
compatible with the stronger Kuznetsov--Perry conjecture that
rationality implies $\mathcal A_X\simeq D^b(S)$ for an untwisted
K3 surface $S$ \cite[Conjecture~3.12]{Kuznetsov2016DerivedCO}, where $\mathcal A_X$ is the Kuznetsov component of $X$.

\begin{corollary}\label{cor:inf}
If $(d,-1)\neq0$, then a very general member of $\mathcal D_d$ is irrational. In particular, this holds
for $d=12n^2$ with $n\geq1$, giving infinitely many Hodge--special
divisors such that the very general member is irrational.
\end{corollary}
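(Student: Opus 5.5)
The corollary has two parts: a very-general irrationality statement that follows from Theorem~\ref{thm:gm}, and an arithmetic check for $d=12n^2$. I would handle them in that order.

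\textbf{Step 1: very general members are Hodge-general.} The plan is to show that a very general member of $\mathcal D_d$ lies in $\mathcal D_d^{\mathrm{Hdg}}$, by repeating the argument of Lemma~\ref{lem:generic} for Gushel--Mukai fourfolds.
\begin{itemize}
\item Pass to a finite étale cover of a smooth dense open subset of $\mathcal D_d$ on which the rank-$3$ lattice $K$ is labelled. Let $\mathbb T_\Q=(K\otimes\Q)^\perp$ be the resulting polarized variation of Hodge structures, of rank $21$ and signature $(19,2)$ after twisting.
\item Use that the period map to the associated type IV domain $\Omega_T$ is open on $\mathcal D_d$. This holds because the GM period map is dominant onto its image, with fibres of dimension $2$ by Debarre--Iliev--Manivel, and $\mathcal D_d$ is the preimage of a hyperplane section. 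Openness is what is actually needed, not immersiveness.
\item With openness in hand, the conditions $q(v,\omega)=0$ for $v\neq0$ and $M$ preserving $[\omega]$ for nonscalar $M$ each cut out strict closed analytic subsets, exactly as in Lemma~\ref{lem:generic}.
\item Countability of the choices of $v$ and $M$, together with Cattani--Deligne--Kaplan algebraicity, then shows the non-Hodge-general locus is a countable union of strict closed subvarieties.
\end{itemize}
This step is the main point to verify. I expect it to go through, since the only GM-specific input is openness of the period map on the divisor.

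\textbf{Step 2: irrationality when $\gamma_d\neq0$.} Theorem~\ref{thm:gm} says that a rational Hodge-general member forces $\gamma_d=0$. So when $(d,-1)\neq0$, no rational member lies in $\mathcal D_d^{\mathrm{Hdg}}$, and by Step 1 a very general member is irrational.

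\textbf{Step 3: the family $d=12n^2$.} Since $n^2$ is a square, $(12n^2,-1)=(12,-1)=(3,-1)$. At the place $p=3$, take $a=3$, $b=-1$ in the tame symbol formula $(a,b)_p=(-1)^{\alpha\beta\varepsilon(p)}u^{\beta}v^{\alpha}$, where $a=p^{\alpha}u$ and $b=p^{\beta}v$. Here $\alpha=1$, $\beta=0$, $v=-1$, so the symbol is the Legendre symbol $\left(\frac{-1}{3}\right)=-1$. Hence $\gamma_{12n^2}\neq0$. Equivalently, $12n^2$ is not a sum of two squares, because the prime $3\equiv3\pmod4$ occurs in it to an odd power.

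It remains to check that each $\mathcal D_{12n^2}$ is nonempty. We have $12n^2\equiv0\pmod4$, and by the existence criterion of \cite[Corollary~6.3]{debarre2015special} the discriminant-$d$ locus is nonempty for $d\equiv0\pmod4$ with $d>8$; this covers $d=12n^2$ for all $n\geq1$. The values $12n^2$ are distinct for distinct $n$, so they give infinitely many Hodge--special divisors whose very general member is irrational.
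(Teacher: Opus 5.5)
Your proposal is correct and follows the paper's proof essentially step by step: you adapt Lemma~\ref{lem:generic} to $\mathcal D_d$ using only openness of the period map, invoke Theorem~\ref{thm:gm}, and compute $(12n^2,-1)_3=(3,-1)_3=-1$. Two small corrections. First, the clean input for openness is \cite[Theorem~4.4]{debarre2015special}, which says the local period map is a submersion; its fibres are $4$-dimensional (a $24$-dimensional moduli space over a $20$-dimensional period domain), not $2$-dimensional. Second, nonemptiness of $\mathcal D_{12n^2}$ as a locus of actual Gushel--Mukai fourfolds is \cite[Theorem~8.1]{debarre2015special}, which shows that the image of the period map meets these divisors; \cite[Corollary~6.3]{debarre2015special} only describes the divisors in the period domain.
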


\begin{proof}
The local period map for Gushel--Mukai fourfolds is a submersion
\cite[Theorem~4.4]{debarre2015special}. Therefore, the argument of
Lemma~\ref{lem:generic}, after replacing $\mathcal C_d$ by either irreducible
component of $\mathcal D_d$, shows that its very general member lies in
$\mathcal D_d^{\mathrm{Hdg}}$. The first assertion then follows from
Theorem~\ref{thm:gm}. For $d=12n^2$, square invariance of the Hilbert symbol gives $(d,-1)_3=(3,-1)_3=-1$,
so $(d,-1)\neq0$. Finally, every discriminant $12n^2$ occurs by
\cite[Theorem~8.1]{debarre2015special}.
\end{proof}

\begin{proof}[Proof of Corollary~\ref{cor:scroll}]
The closure of the locus of Gushel--Mukai fourfolds containing
a cubic scroll is an irreducible component of
$\mathcal D_{12}$ by \cite[Proposition~7.6]{debarre2015special}. Since $v_3(12)=1$, we have $(12,-1)\neq0$. The result therefore follows from Corollary~\ref{cor:inf}.
\end{proof}

\section{Verra fourfolds}
A Verra fourfold $V$ is a smooth double cover $\pi: V \rightarrow \PP^2 \times \PP^2$ branched over a smooth divisor of bidegree $(2,2)$. The quadric surface fibration $V \rightarrow \PP^2$, induced by composing $\pi$ with either projection $p_i: \PP^2 \times \PP^2 \rightarrow \PP^2$, has a discriminant locus that is a plane sextic curve. The double cover of $\PP^2$ branched over this sextic (when smooth) is a polarized K3 surface of degree $2$. Although a very general symmetric Verra fourfold is known to be $\ZZ/2$--irrational~\cite[Theorem~6.4]{fay2026equivariantirrationalitygeneralsymmetric}, ordinary irrationality of a very general Verra fourfold remains conjectural.

Benedetti--Guéré-Manivel-Perrin have however made recent progress on its birational geometry. In~\cite[Corollary~19]{benedetti2026quantumcohomologybirationalgeometry} they prove that a Verra fourfold is neither birational to a very general cubic nor to a very general Gushel--Mukai fourfold.  Here we extend this result to certain special cubic and Gushel--Mukai fourfolds. 


\begin{corollary}\label{cor:verra} \ 
\begin{enumerate}
    \item Let $X$ be a cubic fourfold such that $X\in\mathcal C_d^{\mathrm{Hdg}}$. If $X$ is birational to a smooth Verra fourfold then $\beta_d = 0$.
    \item Let $X$ be a Gushel--Mukai fourfold such that $X\in\mathcal D_d^{\mathrm{Hdg}}$. If $X$ is birational to a smooth Verra fourfold then $\gamma_d = 0$.
\end{enumerate}
\end{corollary}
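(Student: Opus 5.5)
The plan is to reduce both statements to the Hasse--Witt comparison already carried out in Proposition~\ref{prop:hasse} and Theorem~\ref{thm:gm}. The only place rationality entered those arguments was through Gu\'er\'e's theorem (for cubics) and Benedetti--Manivel--Perrin's theorem (for Gushel--Mukai fourfolds). Each of these supplies a projective K3 surface $S$ and a Hodge isomorphism between the primitive (respectively $B_X^\perp$) part of $H^4(X,\Q)$ and $H^2(S,\Q)(-1)$. So it suffices to produce the same kind of K3 realization whenever $X$ is birational to a smooth Verra fourfold $V$.

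\textbf{Step 1: a K3 realization from birationality.} I would invoke the Hodge atom machinery of Katzarkov--Kontsevich--Pantev--Yu, as used by Benedetti--Gu\'er\'e--Manivel--Perrin in \cite[Corollary~19]{benedetti2026quantumcohomologybirationalgeometry}. There the relevant atom of a Verra fourfold is computed: its middle cohomology carries a K3-type piece which, after the atom decomposition of quantum cohomology, is identified with (a twist of) $H^2$ of a K3 surface. The natural candidate is the degree-$2$ K3 surface $S$ obtained as the double plane branched over the discriminant sextic. Birational fourfolds have isomorphic collections of atoms up to the atoms of points, curves and surfaces created by blowups (Iritani's decomposition). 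Hence the unique K3-type atom of $X$ must match one of $V$. This yields a projective K3 surface $S$ and a Hodge isomorphism $H^4(X,\Q)_{\mathrm{prim}}\simeq H^2(S,\Q)(-1)$ in the cubic case, and $P_X(1)\simeq H^2(S,\Q)$ in the Gushel--Mukai case. Rather than reprove this, I would cite the corresponding statement in \cite{benedetti2026quantumcohomologybirationalgeometry}. Their corollary rules out birationality to a \emph{very general} cubic or Gushel--Mukai fourfold precisely because those have no such K3 realization.

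\textbf{Step 2: rerun the Hasse argument.} Given this input, the rest is verbatim. For $X\in\mathcal C_d^{\mathrm{Hdg}}$, restricting to transcendental parts gives $T_X(1)\simeq T_S$. Hodge-generality forces the similitude \eqref{eq:similitude} with some $c>0$, exactly as in Proposition~\ref{prop:source}, and also forces $\rho(S)=1$ by rank count. Lemma~\ref{lem:k3-hasse} and Remark~\ref{rem:rank} then give $w(T_X)=(-1,-1)$. Comparing with \eqref{eq:wT} yields $\beta_d=0$, as in Proposition~\ref{prop:hasse}. For $X\in\mathcal D_d^{\mathrm{Hdg}}$ the transcendental lattice again has rank $21$, since $24-3=21$. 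The same chain gives $w(T_X)=(-1,-1)$, and comparison with \eqref{eq:gm-hasse} gives $\gamma_d=0$.

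\textbf{Main obstacle.} The main obstacle is Step 1: ensuring the cited Verra result really gives a Hodge isomorphism onto the \emph{full} $H^2(S,\Q)$, of the same shape as Gu\'er\'e's theorem. It is not enough to have an abstract K3-type atom. I would check that the atom of $V$ is the primitive cohomology of $V$ complementary to the classes pulled back from $\PP^2\times\PP^2$, and that this is Hodge-isomorphic to $H^2(S,\Q)(-1)$, with rank $22$ matching $H^4(X,\Q)_{\mathrm{prim}}$ and $P_X$. If only an isomorphism of transcendental parts is available, that suffices anyway: Step 2 uses only $T_X(1)\simeq T_S$ and $\rho(S)=1$.
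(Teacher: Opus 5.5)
Your proposal is correct and is essentially the paper's proof. The paper takes the K3 realization $\varphi\colon P_X\xrightarrow{\sim}H^2(S,\Q)(-1)$ directly from \cite[Theorem~21]{benedetti2026quantumcohomologybirationalgeometry}, without redoing any atom computation, then reruns Proposition~\ref{prop:source} to get $\rho(S)=1$ and a similitude $T_X(1)\simeq T_S$, hence $w(T_X)=(-1,-1)$, and compares with \eqref{eq:wT} or \eqref{eq:gm-hasse}.

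Your fallback remark is right and worth keeping: only $T_X(1)\simeq T_S$ is used, and $\operatorname{rk}T_S=21$ already forces $\rho(S)=1$. The check you sketch would not go through as stated, however. A Verra fourfold has $b_4=24$, so the complement of the three classes pulled back from $\PP^2\times\PP^2$ has rank $21$, not $22$, and cannot be Hodge-isomorphic to $H^2(S,\Q)(-1)$.
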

\begin{proof}
Let $P_X = H^4(X,\Q)_{\text{prim}}$ in the cubic case and $P_X = B_{X,\Q}^\perp$ in the Gushel--Mukai case. Assume that $X$ is birational to a smooth Verra fourfold. By \cite[Theorem~21]{benedetti2026quantumcohomologybirationalgeometry}, there exists a projective K3 surface $S$ and a Hodge isomorphism $\varphi\,\colon P_X\xrightarrow{\sim}H^2(S,\Q)(-1)$. The argument of Proposition~\ref{prop:source} gives $\rho(S)=1$ and a Hodge similitude $f\colon T_X(1)\xrightarrow{\sim}T_S$. Hence $w(T_X)=(-1,-1)$. Comparison with \eqref{eq:wT} in the cubic case and \eqref{eq:gm-hasse} in the Gushel--Mukai case gives the asserted
vanishing. Hence whenever $\beta_d \ne 0$ or $\gamma_d \ne 0$, $X$ is not birational to a smooth Verra fourfold.  
\end{proof}

\begin{remark}
If $X\in C_d$ and $\beta_d \ne 0$ (respectively $X\in D_d$ and $\gamma_d \ne 0$), then any cubic fourfold (respectively Gushel--Mukai fourfold) birational to
a Verra fourfold must have higher algebraic rank or non-scalar
transcendental Hodge endomorphisms.
\end{remark}

\begin{proof}[Proof of Corollary~\ref{cor:verra-examples}]
Apply Corollary~\ref{cor:verra} and our established nonvanishing calculations for
$d=12$ and $d=20$. The final assertion follows
from the birational description of type $(\mathrm{c7})$ fourfolds
in Section~\ref{sec:veronese}.
\end{proof}

\section{Acknowledgments}
I thank Nick Addington for truly invaluable discussions and feedback during the preparation of this work. I also thank Tom Coates and J\'{e}r\'{e}my Gu\'{e}r\'{e} for providing helpful feedback and guidance. This research was funded by the EPSRC grant EP/Y028872/1. 

\bibliographystyle{alpha}
\bibliography{refs}

@article{Kuznetsov2015OnKV,
	title={On {K}{\"u}chle varieties with {P}icard number greater than 1},
	author={Alexander Kuznetsov},
	journal={Izvestiya: Mathematics},
	year={2015},
	volume={79},
	pages={698 - 709},
	url={https://api.semanticscholar.org/CorpusID:119573846}
}

@article{Fan2020NewRC,
	title={New rational cubic fourfolds arising from {C}remona transformations},
	author={Yu-Wei Fan and Kuan-Wen Lai},
	journal={Algebraic Geometry},
	year={2020},
}

@book{lecturesHuybrechts,
    author = "Huybrechts, Daniel",
    title = "{Lectures on K3 Surfaces}",
    isbn = "978-1-107-15304-2, 978-1-316-79757-0",
    publisher = "Cambridge University Press",
    year = "2016"
}

@misc{chebolu2016wittscancellationtheoremseen,
      title={Witt's cancellation theorem seen as a cancellation}, 
      author={Sunil K. Chebolu and Dan McQuillan and Jan Minac},
      year={2016},
      eprint={1106.2595},
      archivePrefix={arXiv},
      primaryClass={math.NT},
      url={https://arxiv.org/abs/1106.2595}, 
}

@article{Addington_2016,
   title={On two rationality conjectures for cubic fourfolds},
   volume={23},
   ISSN={1945-001X},
   url={http://dx.doi.org/10.4310/MRL.2016.v23.n1.a1},
   DOI={10.4310/mrl.2016.v23.n1.a1},
   number={1},
   journal={Mathematical Research Letters},
   publisher={International Press of Boston},
   author={Addington, Nicolas},
   year={2016},
   pages={1–13} }

@article{hassett,
	author = {Hassett, Brendan},
	date = {2000/01/01},
	doi = {10.1023/A:1001706324425},
	id = {Hassett2000},
	isbn = {1570-5846},
	journal = {Compositio Mathematica},
	number = {1},
	pages = {1--23},
	title = {Special Cubic Fourfolds},
	url = {https://doi.org/10.1023/A:1001706324425},
	volume = {120},
	year = {2000}}

@inbook{SE,
	title        = {A course in arithmetic},
	author       = {Serre, Jean-Pierre},
	year         = {1973},
	publisher    = {Springer-Verlag},
	isbn         = {0387900403},
	language     = {English},
	url          = {https://nla.gov.au/nla.cat-vn2542782},
}

@misc{KKPY,
	title={Birational Invariants from {H}odge Structures and Quantum Multiplication}, 
	author={Ludmil Katzarkov and Maxim Kontsevich and Tony Pantev and Tony Yue Yu},
	year={2026},
	eprint={2508.05105},
	archivePrefix={arXiv},
	primaryClass={math.AG},
	url={https://arxiv.org/abs/2508.05105}, 
}

@misc{iritani2026notesdecompositiontheoremblowups,
	title={Notes on the decomposition theorem for blowups}, 
	author={Hiroshi Iritani},
	year={2026},
	eprint={2604.10028},
	archivePrefix={arXiv},
	primaryClass={math.AG},
	url={https://arxiv.org/abs/2604.10028}, 
}

@article{kuchle,
	author = {K\"uchle, Oliver},
	date = {1995/01/01},
	doi = {10.1007/BF02571923},
	id = {K{\"u}chle1995},
	isbn = {1432-1823},
	journal = {Mathematische Zeitschrift},
	number = {1},
	pages = {563--575},
	title = {On {F}ano 4-folds of index 1 and homogeneous vector bundles over {G}rassmannians},
	url = {https://doi.org/10.1007/BF02571923},
	volume = {218},
	year = {1995}}

@misc{c5,
	title={An atomic criterion for irrationality without quantum computations}, 
	author={Vladimiro Benedetti and Aideen Fay and Jérémy Guéré and Laurent Manivel and Nicolas Perrin},
	year={2026},
	eprint={2607.26718},
	archivePrefix={arXiv},
	primaryClass={math.AG},
	url={https://arxiv.org/abs/2607.26718}, 
}

@article{Kuznetsov2016DerivedCO, title={Derived categories of {G}ushel--{M}ukai varieties}, volume={154}, DOI={10.1112/S0010437X18007091}, number={7}, journal={Compositio Mathematica}, author={Kuznetsov, Alexander and Perry, Alexander}, year={2018}, pages={1362–1406}}

@article{pertusi2019double,
	title={On the double {EPW} sextic associated to a {G}ushel--{M}ukai fourfold},
	author={Pertusi, Laura},
	journal={Journal of the London Mathematical Society},
	volume={100},
	number={1},
	pages={83--106},
	year={2019},
	publisher={Wiley Online Library}
}

@article{debarre2015special,
	title={Special prime {F}ano fourfolds of degree 10 and index 2},
	author={Debarre, Olivier and Iliev, Atanas and Manivel, Laurent},
	journal={Recent advances in algebraic geometry},
	volume={417},
	pages={123},
	year={2015},
	publisher={Cambridge University Press Cambridge}
}

@misc{benedetti2026quantumcohomologyirrationalitygushelmukai,
	title={Quantum cohomology and irrationality of {G}ushel--{M}ukai fourfolds}, 
	author={Vladimiro Benedetti and Laurent Manivel and Nicolas Perrin},
	year={2026},
	eprint={2603.17487},
	archivePrefix={arXiv},
	primaryClass={math.AG},
	url={https://arxiv.org/abs/2603.17487}, 
}

@article{Huybrechts2015TheKC,
	title={The {K3} category of a cubic fourfold},
	author={Daniel Huybrechts},
	journal={Compositio Mathematica},
	year={2015},
	volume={153},
	pages={586 - 620},
	url={https://api.semanticscholar.org/CorpusID:117981635}
}

@article{iritani2023quantum,
	title={Quantum cohomology of blowups},
	author={Iritani, Hiroshi},
	journal={arXiv preprint arXiv:2307.13555},
	year={2023}
}

@misc{guere,
	title={On the irrationality of cubic fourfolds}, 
	author={J{\'e}r{\'e}my Gu{\'e}r{\'e}},
	year={2026},
	eprint={2603.04518},
	archivePrefix={arXiv},
	primaryClass={math.AG},
	url={https://arxiv.org/abs/2603.04518}, 
}

@article{CDK,
	author  = {Cattani, Eduardo and Deligne, Pierre and Kaplan, Aroldo},
	title   = {On the locus of {H}odge classes},
	journal = {J. Amer. Math. Soc.},
	volume  = {8},
	number  = {2},
	year    = {1995},
	pages   = {483--506},
	doi     = {10.1090/S0894-0347-1995-1273413-2}
}

@article{Zarhin1983,
	author = {Zarhin, Yu.G.},
	journal = {Journal für die reine und angewandte Mathematik},
	pages = {193-220},
	title = {Hodge groups of {K3} surfaces.},
	url = {http://eudml.org/doc/152536},
	volume = {341},
	year = {1983},
}

@misc{benedetti2026quantumcohomologybirationalgeometry,
      title={Quantum cohomology and birational geometry of {V}erra fourfolds}, 
      author={Vladimiro Benedetti and Jérémy Guéré and Laurent Manivel and Nicolas Perrin},
      year={2026},
      eprint={2605.30450},
      archivePrefix={arXiv},
      primaryClass={math.AG},
      url={https://arxiv.org/abs/2605.30450}, 
}

@misc{fay2026equivariantirrationalitygeneralsymmetric,
      title={Equivariant irrationality of very general symmetric {V}erra fourfolds}, 
      author={Aideen Fay},
      year={2026},
      eprint={2605.30439},
      archivePrefix={arXiv},
      primaryClass={math.AG},
      url={https://arxiv.org/abs/2605.30439}, 
}

\end{document}